\title{Associative string functions}
\author{Erkko Lehtonen}
\address{Centro de \'Algebra da Universidade de Lisboa, Avenida Professor Gama Pinto 2, 1649-003 Lisboa, Portugal}
\address{Departamento de Matem\'atica, Faculdade de Ci\^encias, Universidade de Lisboa, 1749-016 Lisboa, Portugal} \email{erkko[at]campus.ul.pt}
\author{Jean-Luc Marichal}
\address{Mathematics Research Unit, FSTC, University of Luxembourg \\
6, rue Coudenhove-Kalergi, L-1359 Luxembourg, Luxembourg} \email{jean-luc.marichal[at]uni.lu}
\author{Bruno Teheux}
\address{Mathematics Research Unit, FSTC, University of Luxembourg \\
6, rue Coudenhove-Kalergi, L-1359 Luxembourg, Luxembourg} \email{bruno.teheux[at]uni.lu}
\date{Ocober 16, 2014}
\begin{document}

\theoremstyle{plain}
\newtheorem{theorem}{Theorem}[section]
\newtheorem{lemma}[theorem]{Lemma}
\newtheorem{proposition}[theorem]{Proposition}
\newtheorem{corollary}[theorem]{Corollary}
\newtheorem{fact}[theorem]{Fact}
\newtheorem*{main}{Main Theorem}

\theoremstyle{definition}
\newtheorem{definition}[theorem]{Definition}
\newtheorem{example}[theorem]{Example}

\theoremstyle{remark}
\newtheorem*{conjecture}{Conjecture}
\newtheorem{remark}{Remark}
\newtheorem{claim}{Claim}

\newcommand{\N}{\mathbb{N}}
\newcommand{\Q}{\mathbb{Q}}
\newcommand{\R}{\mathbb{R}}

\newcommand{\ran}{\mathrm{ran}}
\newcommand{\dom}{\mathrm{dom}}
\newcommand{\id}{\mathrm{id}}
\newcommand{\med}{\mathrm{med}}
\newcommand{\ofo}{\mathrm{ofo}}
\newcommand{\Ast}{\boldsymbol{\ast}}

\newcommand{\bfu}{\mathbf{u}}
\newcommand{\bfv}{\mathbf{v}}
\newcommand{\bfw}{\mathbf{w}}
\newcommand{\bfx}{\mathbf{x}}
\newcommand{\bfy}{\mathbf{y}}
\newcommand{\bfz}{\mathbf{z}}

\newcommand{\length}[1]{{\vert #1 \vert}}

\newcommand\restr[2]{{
  \left.\kern-\nulldelimiterspace 
  #1 
  \right|_{#2} 
  }}

\begin{abstract}
We introduce the concept of associativity for string functions, where a string function is a unary operation on the set of strings over a given alphabet. We discuss this new property and describe certain classes of associative string functions. We also characterize the recently introduced preassociative functions as compositions of associative string functions with injective unary maps. Finally, we provide descriptions of the classes of associative and preassociative functions which depend only on the length of the input.
\end{abstract}

\keywords{Associativity, preassociativity, string function, functional equation, axiomatization}

\subjclass[2010]{20M05, 20M32, 39B72, 68R99}

\maketitle

\section{Introduction}

Throughout this paper, $X$ denotes a nonempty set, called the \emph{alphabet,} and its elements are called \emph{letters}. The symbol $X^*$ stands for the free monoid $\bigcup_{n \geqslant 0} X^n$ generated by $X$, and its elements are called \emph{strings}, where the empty string $\varepsilon$ is such that $X^0=\{\varepsilon\}$. Thus, we assume that $X^*$ is endowed with the concatenation operation for which the empty string $\varepsilon$ is the neutral element. We denote the elements of $X^*$ by bold roman letters $\bfx$, $\bfy$, $\bfz$. If we want to stress that such an element is a letter of $X$, we use non-bold italic letters $x$, $y$, $z$, $\ldots$ For every string $\bfx$ and every integer $n\geqslant 0$, the power $\bfx^n$ stands for the string obtained by concatenating $n$ copies of $\bfx$. In particular, we have $\bfx^0=\varepsilon$. The notation $\bfx^*$ stands for the set of all powers of $\bfx$. The \emph{length} of a string $\bfx$ is denoted by $|\bfx|$. In particular, we have $|\varepsilon|=0$.

Let $Y$ be a nonempty set. Recall that, for every integer $n\geqslant 0$, a function $F\colon X^n\to Y$ is said to be \emph{$n$-ary}. We say that a function $F\colon X^*\to Y$ has an \emph{indefinite arity} or, more simply, is \emph{variadic} or \emph{$\Ast$-ary} (pronounced ``star-ary''). In particular, we say that a variadic function $F \colon X^* \to X^*$ is a \textit{string function} over the alphabet $X$. Finally, we say that a variadic function $F\colon X^*\to Y$ is \emph{standard} if it satisfies the condition
$$
F(\bfx) = F(\varepsilon)\quad\iff\quad\bfx = \varepsilon.
$$

We now introduce the associativity property for string functions. Equivalent conditions are given in Proposition~\ref{prop:8sdf7zz}.

\begin{definition}\label{de:AssStr}
We say that a string function $F \colon X^* \to X^*$ is \emph{associative} if it satisfies the condition
\begin{equation}\label{eq;assoc}
F(\bfx \bfy \bfz) = F(\bfx F(\bfy) \bfz){\,},\qquad \bfx,\bfy,\bfz \in X^*.
\end{equation}
\end{definition}

Data processing can be seen as the computation of string functions. Many commonplace data processing tasks correspond to associative string functions, e.g., sorting data in alphabetical order, transforming a string of letters into upper case. In this context, associativity may be a desirable property because it allows one to work locally on small pieces of data at a time. For instance, the string function which corresponds to sorting the letters of every string in alphabetical order is associative and standard whereas the string function which consists in removing from every string all occurrences of a given letter is associative but not standard.

\begin{fact}\label{fact:45u}
Let $F\colon X^*\to X^*$ be an associative function. Then the following conditions hold.
\begin{enumerate}
\item[(a)] $F$ is idempotent w.r.t.\ composition, i.e., we have $F\circ F=F$ (take $\bfx\bfz=\varepsilon$ in Eq.~(\ref{eq;assoc})).

\item[(b)] If $F$ is standard, then $F(\varepsilon)=\varepsilon$ (here associativity can be replaced by the weaker condition $F(F(\varepsilon))=F(\varepsilon)$).

\item[(c)] If $F$ is not standard, then there exists $\mathbf{a}\in X^*\setminus\{\varepsilon\}$ such that $F(\bfx\bfz)=F(\bfx\mathbf{a}\bfz)$ for every $\bfx,\bfz\in X^*$.
\end{enumerate}
\end{fact}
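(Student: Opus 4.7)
The plan is to dispatch (a), (b), (c) in order, since each builds on what precedes. The single recurring device is that $\varepsilon$ is the neutral element for concatenation, so substituting $\varepsilon$ for any of $\bfx, \bfy, \bfz$ in \eqref{eq;assoc} simply deletes the corresponding factor. For (a), following the hint, I would set $\bfx = \bfz = \varepsilon$ in \eqref{eq;assoc} to get $F(\bfy) = F(F(\bfy))$ for every $\bfy \in X^*$, which is exactly $F \circ F = F$.

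For (b), I would specialize (a) at $\bfy = \varepsilon$ to obtain the weaker relation $F(F(\varepsilon)) = F(\varepsilon)$ mentioned in the parenthesis; reading this as an instance of $F(\bfx) = F(\varepsilon)$ with $\bfx := F(\varepsilon)$, the standardness biconditional forces $F(\varepsilon) = \varepsilon$. Note that associativity was used only through its consequence in (a), matching the parenthetical remark.

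For (c), I would first unpack the failure of standardness: its ``$\Leftarrow$'' direction ($\bfx = \varepsilon \Rightarrow F(\bfx) = F(\varepsilon)$) is tautological, so there must exist $\mathbf{a} \in X^* \setminus \{\varepsilon\}$ with $F(\mathbf{a}) = F(\varepsilon)$. This is the only delicate point in the whole argument. Once such an $\mathbf{a}$ is in hand, applying \eqref{eq;assoc} twice --- once with $\bfy := \mathbf{a}$ and once with $\bfy := \varepsilon$ --- yields
$$F(\bfx \mathbf{a} \bfz) \;=\; F(\bfx\, F(\mathbf{a})\, \bfz) \;=\; F(\bfx\, F(\varepsilon)\, \bfz) \;=\; F(\bfx\, \varepsilon\, \bfz) \;=\; F(\bfx \bfz)$$
for every $\bfx, \bfz \in X^*$, as required. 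No further technical obstacle is expected: every step is a direct substitution into \eqref{eq;assoc}.
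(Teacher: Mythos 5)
Your proof is correct and follows exactly the route the paper intends (the paper supplies only the parenthetical hints, which you fill in faithfully): (a) by the substitution $\bfx=\bfz=\varepsilon$, (b) from $F(F(\varepsilon))=F(\varepsilon)$ and standardness, and (c) by producing $\mathbf{a}\neq\varepsilon$ with $F(\mathbf{a})=F(\varepsilon)$ and applying \eqref{eq;assoc} with $\bfy=\mathbf{a}$ and $\bfy=\varepsilon$. Nothing to add.
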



In Section 2 of this paper we investigate the associativity property (\ref{eq;assoc}) for string functions. In particular, we provide different equivalent definitions of this property. We also investigate the subclass of associative functions $F\colon X^*\to X^*$ satisfying the condition $|F(\bfx)|\leqslant m$ for every $\bfx\in X^*$, where $m$ is a fixed nonnegative integer (when $m=1$ this subclass consists of the associative variadic functions $F\colon X^* \to X\cup\{\varepsilon\}$, whose standard versions have been investigated in \cite{MarTeh}). In Section 3 we investigate the class of preassociative functions, which was recently introduced in \cite{MarTeh}. In particular we characterize these functions as compositions of associative string functions with injective unary maps. Finally, in Section 4 we provide descriptions of the classes of associative and preassociative functions which depend only on the length of the input.

The following notation will be used in this paper. We let $\N$ denote the set of nonnegative integers. For every $n\in\N$ and for every function $F\colon X^*\to Y$, we denote by $F_n$ the \emph{$n$-ary part of $F$,} i.e., the restriction $\restr{F}{X^n}$ of $F$ to the set $X^n$. The domain and range of any function $f$ are denoted by $\dom(f)$ and $\ran(f)$, respectively. The identity function on any nonempty set is denoted by $\id$.

\section{Associative functions}

As mentioned in the introduction, in this section we investigate associativity for string functions. Clearly, the identity function on $X^*$ is associative and standard. The following two examples provide nontrivial instances of associative functions.

\begin{example}[Letter removing]\label{ex:LetterRemov56}
Let $a\in X$ be fixed. Let the map $F_a \colon X^* \to X^*$ be defined inductively by $F_a(z) = z$ if $z \neq a$, $F_a(a) = \varepsilon$, and $F_a(\bfx z) = F_a(\bfx)F_a(z)$. Let also the map $G_a\colon X^*\to X^*$ be defined by $G_a(\bfx)=a$, if $\bfx\in a^*$, and $G_a(\bfx)=F_a(\bfx)$, if $\bfx\notin a^*$. Then both $F_a$ and $G_a$ are associative but not standard. Moreover, $F_a(\varepsilon)=\varepsilon$ and $G_a(\varepsilon)=a\neq\varepsilon$.
\end{example}

\begin{example}[Duplicate removing]\label{ex:DuplRemov56}
Define the function $\ofo \colon X^* \to X^*$ by the following procedure.
Given a string $\bfx\in X^*$, delete all repeated occurrences of letters, keeping only the first occurrence of each letter; the resulting string is $\ofo(\bfx)$.
In other words, the function $\ofo$ outputs the letters of its input in the \emph{order of first occurrence} (hence the acronym ofo).
For example,
\begin{align*}
\ofo(indivisibilities) &= indvsblte{\,}, \\
\ofo(subdermatoglyphic) &= subdermatoglyphic{\,}.
\end{align*}
It is easy to verify that the function $\ofo$ is associative and standard.
\end{example}


The following result, which was already established in \cite{CouMar11} for associative and standard functions $F\colon X^* \to X\cup\{\varepsilon\}$, gives equivalent definitions of associativity under the condition $F(\varepsilon)=\varepsilon$. Note that this latter condition need not hold for associative functions (to give an example, take any constant function whose value is distinct from $\varepsilon$).

\begin{proposition}\label{prop:8sdf7zz}
Let $F \colon X^* \to X^*$ be a function such that $F(\varepsilon)=\varepsilon$. The following conditions are equivalent.
\begin{enumerate}
\item[(i)] $F$ is associative.

\item[(ii)] For any $\bfx, \bfy, \bfz, \bfx', \bfy', \bfz' \in X^*$ such that $\bfx \bfy \bfz = \bfx' \bfy' \bfz'$ we have $F(\bfx F(\bfy) \bfz) = F(\bfx' F(\bfy') \bfz')$.

\item[(iii)] For any $\bfx, \bfy, \bfz \in X^*$ we have $F(F(\bfx \bfy ) \bfz) = F(\bfx F(\bfy\bfz ))$.

\item[(iv)] For any $\bfx, \bfy \in X^*$ we have $F(\bfx \bfy) = F(F(\bfx) F(\bfy))$.
\end{enumerate}
\end{proposition}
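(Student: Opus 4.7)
The plan is to prove the cyclic chain (i) $\Rightarrow$ (ii) $\Rightarrow$ (iii) $\Rightarrow$ (iv) $\Rightarrow$ (i); the hypothesis $F(\varepsilon)=\varepsilon$ is used throughout only to erase $\varepsilon$-factors that we insert for bookkeeping.

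The first two implications are routine. For (i) $\Rightarrow$ (ii), applying Eq.~(\ref{eq;assoc}) to each side of the target identity reduces it to $F(\bfx\bfy\bfz)=F(\bfx'\bfy'\bfz')$, which holds by the assumption on the two factorizations. For (ii) $\Rightarrow$ (iii), observe that $F(F(\bfx\bfy)\bfz)=F(\varepsilon\cdot F(\bfx\bfy)\cdot\bfz)$ and $F(\bfx F(\bfy\bfz))=F(\bfx\cdot F(\bfy\bfz)\cdot\varepsilon)$ are two instances of $F(\bfu F(\bfv)\bfw)$ whose underlying concatenations $\bfu\bfv\bfw$ both equal $\bfx\bfy\bfz$, so (ii) identifies them.

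The main obstacle is (iii) $\Rightarrow$ (iv). First, I would specialize (iii) by setting $\bfy=\varepsilon$, which upon invoking $F(\varepsilon)=\varepsilon$ yields the auxiliary identity $F(F(\bfx)\bfz)=F(\bfx F(\bfz))$. Putting $\bfz=\varepsilon$ here gives the idempotence $F\circ F=F$. Next, (iii) with $\bfz=\varepsilon$ delivers $F(F(\bfx\bfy))=F(\bfx F(\bfy))$, and idempotence simplifies the left-hand side to $F(\bfx\bfy)$, so $F(\bfx\bfy)=F(\bfx F(\bfy))$. Finally, substituting $F(\bfy)$ for $\bfz$ in the auxiliary identity and invoking idempotence once more gives $F(F(\bfx)F(\bfy))=F(\bfx F(F(\bfy)))=F(\bfx F(\bfy))$, whence chaining the two equalities yields $F(\bfx\bfy)=F(F(\bfx)F(\bfy))$, which is (iv).

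For (iv) $\Rightarrow$ (i), setting $\bfy=\varepsilon$ in (iv) again gives $F\circ F=F$, and two applications of (iv) then show that $F(F(\bfy)\bfz)=F(F(F(\bfy))F(\bfz))=F(F(\bfy)F(\bfz))=F(\bfy\bfz)$. Consequently
\begin{equation*}
F(\bfx F(\bfy)\bfz)=F(F(\bfx)F(F(\bfy)\bfz))=F(F(\bfx)F(\bfy\bfz))=F(\bfx\bfy\bfz),
\end{equation*}
establishing (i). The only delicate step is (iii) $\Rightarrow$ (iv), where both $F\circ F=F$ and the symmetric absorption $F(\bfx\bfy)=F(\bfx F(\bfy))=F(F(\bfx)F(\bfy))$ must be bootstrapped from (iii) by well-chosen specializations of the three string variables.
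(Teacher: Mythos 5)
Your proposal is correct and follows essentially the same route as the paper: the same cyclic chain (i) $\Rightarrow$ (ii) $\Rightarrow$ (iii) $\Rightarrow$ (iv) $\Rightarrow$ (i), with idempotence $F\circ F=F$ extracted by specializing variables to $\varepsilon$ and then used to bootstrap the absorption identities $F(\bfx\bfy)=F(\bfx F(\bfy))=F(F(\bfx)\bfy)$. The only difference is cosmetic bookkeeping in which variables are set to $\varepsilon$ first.
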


\begin{proof}
(i) $\implies$ (ii) $\implies$ (iii). Trivial.

(iii) $\implies$ (iv). Taking $\bfy\bfz=\varepsilon$ shows that $F$ satisfies $F\circ F=F$. Taking $\bfx=\varepsilon$ and then $\bfz=\varepsilon$, we obtain $F(\bfx F(\bfy))=F(F(\bfx)\bfy)=F(F(\bfx\bfy))=F(\bfx\bfy)$ and therefore $F(F(\bfx) F(\bfy))=F(\bfx\bfy)$.

(iv) $\implies$ (i). $F$ clearly satisfies $F\circ F=F$ (take $\bfy=\varepsilon$). Repeated applications of (iv) then give
\begin{multline*}
F(\bfx F(\bfy) \bfz)
= F(F(\bfx F(\bfy)) F(\bfz))
= F(F(F(\bfx) F(F(\bfy))) F(\bfz)) \\
= F(F(F(\bfx) F(\bfy)) F(\bfz))
= F(F(\bfx \bfy) F(\bfz))
= F(\bfx \bfy \bfz),
\end{multline*}
which completes the proof.
\end{proof}

The following proposition shows that the definition of associativity remains unchanged if the length of the
string $\bfx\bfz$ is bounded above by one.

\begin{proposition}\label{prop:xzleq1}
A function $F \colon X^* \to X^*$ is associative if and only if $F(\bfx \bfy \bfz) = F(\bfx F(\bfy) \bfz)$ for any $\bfx,\bfy,\bfz \in X^*$ such that $\length{\bfx \bfz} \leqslant 1$.
\end{proposition}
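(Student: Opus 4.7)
The forward implication is immediate, so the content is the converse: assuming Eq.~(\ref{eq;assoc}) only when $|\bfx\bfz|\leqslant 1$, I want to derive it for arbitrary $\bfx,\bfz\in X^*$. My plan is to extract three cheap consequences of the hypothesis and then lift them by induction on $|\bfx|+|\bfz|$.

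Specializing the hypothesis to $\bfx=\bfz=\varepsilon$ yields
\begin{equation*}
F(F(\bfy)) = F(\bfy), \qquad \bfy\in X^*, \tag{A}
\end{equation*}
i.e.\ $F\circ F=F$. Taking $\bfx=x\in X$, $\bfz=\varepsilon$ gives
\begin{equation*}
F(x\bfy) = F(x F(\bfy)), \qquad x\in X,\ \bfy\in X^*, \tag{B}
\end{equation*}
and taking $\bfx=\varepsilon$, $\bfz=z\in X$ gives
\begin{equation*}
F(\bfy z) = F(F(\bfy) z), \qquad z\in X,\ \bfy\in X^*. \tag{C}
\end{equation*}

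The main step is to prove Eq.~(\ref{eq;assoc}) for all $\bfx,\bfy,\bfz$ by induction on $n:=|\bfx|+|\bfz|$. The base case $n\leqslant 1$ is precisely (A), (B), or (C). For the inductive step with $n\geqslant 2$, I split on where a letter can be peeled off. If $|\bfx|\geqslant 1$, write $\bfx = x\bfx'$ with $x\in X$; then
\begin{equation*}
F(x\bfx'\bfy\bfz) \stackrel{\text{(B)}}{=} F\bigl(x\,F(\bfx'\bfy\bfz)\bigr) \stackrel{\text{IH}}{=} F\bigl(x\,F(\bfx' F(\bfy)\bfz)\bigr) \stackrel{\text{(B)}}{=} F(x\bfx' F(\bfy)\bfz),
\end{equation*}
where the middle step is legitimate because the induction hypothesis applies to $|\bfx'|+|\bfz|=n-1$ and asserts equality of the strings $F(\bfx'\bfy\bfz)$ and $F(\bfx' F(\bfy)\bfz)$. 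If instead $|\bfx|=0$ and $|\bfz|\geqslant 1$, write $\bfz=\bfz'z$ with $z\in X$ and argue symmetrically using (C):
\begin{equation*}
F(\bfy\bfz' z) \stackrel{\text{(C)}}{=} F\bigl(F(\bfy\bfz')\,z\bigr) \stackrel{\text{IH}}{=} F\bigl(F(F(\bfy)\bfz')\,z\bigr) \stackrel{\text{(C)}}{=} F(F(\bfy)\bfz' z).
\end{equation*}

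There is no real obstacle here; the only subtlety is to note that when the induction hypothesis yields an equality of strings, one may freely substitute that string under any further application of $F$. Once the inductive step is in place the proof is complete, and the proposition follows.
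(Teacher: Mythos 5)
Your proof is correct and follows essentially the same route as the paper's: both argue by induction on $\length{\bfx\bfz}$, peel a single letter off $\bfx$ (or symmetrically off $\bfz$), and sandwich the induction hypothesis between two applications of the hypothesis in the one-letter case. The only cosmetic difference is that you read the chain of equalities in the opposite direction from the paper.
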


\begin{proof}
Necessity is obvious. For sufficiency,
assume that $F(\bfx \bfy \bfz) = F(\bfx F(\bfy) \bfz)$ for any $\bfx,\bfy,\bfz \in X^*$ such that $\length{\bfx \bfz} \leqslant 1$.
We prove by induction on $\length{\bfx \bfz}$ that $F(\bfx \bfy \bfz) = F(\bfx F(\bfy) \bfz)$ holds for all $\bfx,\bfy,\bfz \in X^*$.
The basis of the induction is clear from our assumption.
Assume that the claim holds if $\length{\bfx \bfz} = k$ for some $k \geqslant 1$.
Let $\bfx,\bfy,\bfz \in X^*$ be such that $\length{\bfx \bfz} = k + 1$.
If $\length{\bfx} \geqslant 1$, then $\bfx = a \bfx'$ for some $a \in X$, $\bfx' \in X^*$, with $\length{\bfx'} = \length{\bfx} - 1$, and we have
\begin{align*}
F(\bfx F(\bfy) \bfz) =
F(a F(\bfx' F(\bfy) \bfz)) =
F(a F(\bfx' \bfy \bfz)) =
F(\bfx \bfy \bfz),
\end{align*}
where the first and the third equalities hold by our assumption, and the second equality holds by the induction hypothesis since $\length{\bfx' \bfz} = \length{\bfx \bfz} - 1 = k$.
A similar argument shows that $F(\bfx F(\bfy) \bfz) = F(\bfx \bfy \bfz)$ if $\length{\bfz} \geqslant 1$.
This completes the proof, because at least one of $\bfx$ and $\bfz$ is nonempty.
\end{proof}

It is noteworthy that any associative function $F\colon X^*\to X^*$ satisfies the following equation
\begin{equation}\label{eqn:bla2}
F(x_1 \cdots x_n) ~=~ F(F(x_1 \cdots x_{n-1})x_n){\,}, \qquad n \geqslant 1,
\end{equation}
or equivalently,
\begin{equation}\label{eqn:bla}
F(x_1 \cdots x_n) ~=~ F(F( \cdots F(F(\varepsilon) x_1) \cdots ) x_n){\,}, \qquad n \geqslant 1.
\end{equation}

Equation~\eqref{eqn:bla2} clearly shows that every associative function $F\colon X^*\to X\cup\{\varepsilon\}$ is completely determined by its nullary, unary, and binary parts (note however that if $F_0(\varepsilon)\neq\varepsilon$, then we have $F_1(x)=F_2(F_0(\varepsilon)x)$ and hence $F_1$ is also determined by $F_0$ and $F_2$). The following proposition gives an extension of this observation to string functions.

\begin{definition}
Let $D$ be a nonempty set and let $m\in\N$. We say that a map $F \colon D \to X^*$ is \emph{$m$-bounded} if $\length{F(x)}\leqslant m$ for every $x\in D$.
\end{definition}

For instance, the $1$-bounded string functions are exactly the functions $F\colon X^*\to X\cup\{\varepsilon\}$.

\begin{proposition}\label{prop:mB}
Let $F \colon X^* \to X^*$ be an associative function and let $m\in\N$.
\begin{enumerate}
\item[(a)] $F$ is $m$-bounded if and only if $F_0,\ldots,F_{m+1}$ are $m$-bounded.
\item[(b)] If $F$ is $m$-bounded, then $F$ is uniquely determined by its parts of arity at most $m + 1$, i.e., if $G \colon X^* \to X^*$ is an associative $m$-bounded function such that $G_i = F_i$ for $i = 0, \ldots, m + 1$, then $F = G$.
\end{enumerate}
\end{proposition}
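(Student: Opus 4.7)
Both parts reduce to an induction on arity using equation~(\ref{eqn:bla2}), which expresses the value of an associative $F$ on a string of length $n$ in terms of its value on a string of length $n-1$ followed by one extra letter.

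For part~(a), necessity is trivial, since every restriction of an $m$-bounded function is $m$-bounded. For sufficiency I would proceed by induction on $n \in \N$, showing that each $F_n$ is $m$-bounded. The base cases $n = 0, 1, \ldots, m+1$ are exactly the hypothesis. For $n \geqslant m+2$, pick $x_1 \cdots x_n \in X^n$ and apply (\ref{eqn:bla2}):
$$F(x_1 \cdots x_n) = F\bigl(F(x_1 \cdots x_{n-1})\, x_n\bigr).$$
Setting $\bfu = F(x_1 \cdots x_{n-1})$, the induction hypothesis gives $\length{\bfu} \leqslant m$, so $\length{\bfu x_n} \leqslant m+1$, and a second appeal to the induction hypothesis (or to the base case) yields $\length{F(\bfu x_n)} \leqslant m$, as required.

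For part~(b), given an associative $m$-bounded $G$ that agrees with $F$ on all parts of arity at most $m+1$, I would again induct on $n$ to show $F_n = G_n$. The base cases $n \leqslant m+1$ hold by assumption. For $n \geqslant m+2$ and $\bfx = x_1 \cdots x_n$, apply (\ref{eqn:bla2}) to both $F$ and $G$. The induction hypothesis gives $\bfu := F(x_1 \cdots x_{n-1}) = G(x_1 \cdots x_{n-1})$, while $m$-boundedness forces $\length{\bfu x_n} \leqslant m+1$. Since $F$ and $G$ coincide on all arities up to $m+1$, we get $F(\bfu x_n) = G(\bfu x_n)$, whence $F(\bfx) = G(\bfx)$.

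There is no serious obstacle. The only mildly subtle point is that in the induction step of~(a), one must use that $F(x_1 \cdots x_{n-1})$ has length at most $m$ (by the induction hypothesis), not merely $m+1$, in order for $\bfu x_n$ to fall within the range of arities on which $F$ is already known to be $m$-bounded. Once this is noted, equation~(\ref{eqn:bla2}) drives both inductions in essentially identical fashion.
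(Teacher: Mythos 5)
Your proposal is correct and follows essentially the same route as the paper: both parts are proved by induction on the arity using Eq.~\eqref{eqn:bla2}, with the key observation that $m$-boundedness of $F_{n-1}$ forces $F(x_1\cdots x_{n-1})x_n$ to have length at most $m+1$, so the base cases apply. The subtle point you flag is exactly the one the paper's argument relies on.
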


\begin{proof}
(a)
Necessity is trivial.
For sufficiency, assume that $F_0,\ldots,F_{m+1}$ are $m$-bounded.
We show by induction on $k$ that $F_k$ is $m$-bounded. Assume that $F_k$ is $m$-bounded for some $k \geqslant m + 1$.
Let $\bfx \in X^{k+1}$.
By associativity we have $F_{k+1}(x_1 \cdots x_{k+1}) = F(F_k(x_1 \cdots x_k) x_{k+1})$.
Since $F_k$ is $m$-bounded we have $\length{F_k(x_1 \cdots x_k) x_{k+1}}\leqslant m+1$, and so $F(F_k(x_1 \cdots x_k) x_{k+1})=F_j(F_k(x_1 \cdots x_k) x_{k+1})$ for some $j\in\{1,\ldots,m+1\}$. Since $F_j$ is $m$-bounded, we have $\length{F(F_k(x_1 \cdots x_k) x_{k+1})} \leqslant m$ and hence $F_{k+1}$ is $m$-bounded.

(b)
Let $F\colon X^*\to X^*$ and $G\colon X^*\to X^*$ be associative $m$-bounded functions such that $G_i = F_i$ for $i = 0, \ldots, m + 1$.
We show by induction on $k$ that $F_k=G_k$ for all $k\in\N$. Assume that $F_k=G_k$ for some $k \geqslant m + 1$.
Let $\bfx \in X^{k+1}$.
We then have
\begin{multline*}
G_{k+1}(x_1 \cdots x_{k+1})
= G(G_k(x_1 \cdots x_k) x_{k+1})
= G(F_k(x_1 \cdots x_k) x_{k+1}) \\
= F(F_k(x_1 \cdots x_k) x_{k+1})
= F_{k+1}(x_1 \cdots x_{k+1}),
\end{multline*}
where
the first equality holds by associativity of $G$,
the second equality holds by the inductive hypothesis,
the third equality holds since $F$ is $m$-bounded and by the inductive hypothesis,
and the last equality holds by associativity of $F$.
We conclude that $F = G$.
\end{proof}


Setting $m=1$ in Proposition~\ref{prop:mB}(a) and using Fact~\ref{fact:45u}(b), we immediately obtain the following corollary.

\begin{corollary}
An associative and standard function $F\colon X^*\to X^*$ ranges in $X\cup\{\varepsilon\}$ if and only if $\ran(F_1)\subseteq X$ and $\ran(F_2)\subseteq X$.
\end{corollary}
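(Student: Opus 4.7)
The plan is a direct application of Proposition~\ref{prop:mB}(a) with $m=1$, combined with two invocations of standardness.

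First, I would reformulate the conclusion ``$F$ ranges in $X\cup\{\varepsilon\}$'' as the $1$-boundedness of $F$, since $|F(\bfx)|\leqslant 1$ is precisely the condition $F(\bfx)\in X\cup\{\varepsilon\}$. Proposition~\ref{prop:mB}(a) with $m=1$ then reduces this to the simultaneous $1$-boundedness of the parts $F_0$, $F_1$, and $F_2$.

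Next, I would eliminate $F_0$ from this list: by Fact~\ref{fact:45u}(b), standardness of $F$ forces $F(\varepsilon)=\varepsilon$, so $F_0$ is automatically $1$-bounded (in fact $0$-bounded). Hence the condition collapses to the $1$-boundedness of $F_1$ and $F_2$, equivalently $\ran(F_1)\subseteq X\cup\{\varepsilon\}$ and $\ran(F_2)\subseteq X\cup\{\varepsilon\}$.

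Finally, the statement asks for ranges in $X$, not in $X\cup\{\varepsilon\}$. This requires one more use of standardness: for any $\bfx\in X^1\cup X^2$ we have $\bfx\neq\varepsilon$, so standardness gives $F(\bfx)\neq\varepsilon$, which automatically excludes $\varepsilon$ from $\ran(F_1)$ and $\ran(F_2)$. The reverse direction is immediate, since every element of $X$ has length $1$, so ranges contained in $X$ are a fortiori $1$-bounded. No step is a real obstacle here; the corollary is essentially a bookkeeping repackaging of Proposition~\ref{prop:mB}(a) under the standardness hypothesis.
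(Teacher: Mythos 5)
Your proposal is correct and follows the paper's own route exactly: the corollary is obtained by setting $m=1$ in Proposition~\ref{prop:mB}(a) and invoking Fact~\ref{fact:45u}(b), with the extra (correct) observation that standardness also rules $\varepsilon$ out of $\ran(F_1)$ and $\ran(F_2)$.
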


The following result gives necessary and sufficient conditions for an $m$-bounded function $F \colon X^* \to X^*$ to be associative. This result was established in \cite[Proposition~3.3]{MarTeh} in the special case of standard functions $F \colon X^* \to X\cup\{\varepsilon\}$ satisfying the condition $F(\varepsilon)=\varepsilon$.

\begin{proposition}\label{prop:NSCfor-mBSA}
Let $m\in\N$. An $m$-bounded function $F \colon X^* \to X^*$ is associative if and only if the following conditions are satisfied.
\begin{enumerate}
\item[(a)] $F\circ F_k=F_k$ for $k=0,\ldots,m+1$.
\item[(b)] $F(x)=F(xF(\varepsilon))$ for all $x\in X$.
\item[(c)] $F(F(x\bfy)z)=F(xF(\bfy z))$ for all $x\in X$, $\bfy\in X^*$, and $z\in X$ such that $\length{x\bfy z}\leqslant m+2$.
\item[(d)] Condition \eqref{eqn:bla2} or condition \eqref{eqn:bla}  holds.
\end{enumerate}
\end{proposition}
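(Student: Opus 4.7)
Necessity of (a)--(d) is immediate. $F\circ F=F$ by Fact~\ref{fact:45u}(a), which gives (a); substituting $\bfy=\bfz=\varepsilon$ and $\bfx=x\in X$ in Eq.~\eqref{eq;assoc} gives (b); both sides of (c) equal $F(x\bfy z)$ under associativity, so (c) holds; and (d) follows by setting $\bfx=\varepsilon$, $\bfy=x_1\cdots x_{n-1}$, $\bfz=x_n$ in Eq.~\eqref{eq;assoc}. For sufficiency, assume (a)--(d) and invoke Proposition~\ref{prop:xzleq1}, which reduces the verification of associativity to the three cases $|\bfx\bfz|\leqslant 1$: (i) $\bfx=\bfz=\varepsilon$, (ii) $\bfx=\varepsilon$, $\bfz=z\in X$, and (iii) $\bfx=x\in X$, $\bfz=\varepsilon$.

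Case (ii) is exactly \eqref{eqn:bla2} applied to the string $\bfy z$, i.e., condition (d). For case (i), one must prove full idempotence $F(F(\bfy))=F(\bfy)$. Condition (a) gives this when $|\bfy|\leqslant m+1$; for $|\bfy|>m+1$, write $\bfy=\bfy'y_n$ and chain
\[
F(F(\bfy)) \;=\; F(F(F(\bfy')y_n)) \;=\; F(F(\bfy')y_n) \;=\; F(\bfy),
\]
where the outer equalities use (d) and the middle equality uses (a) applied to the string $F(\bfy')y_n$, which has length $\leqslant m+1$ by $m$-boundedness.

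Case (iii) requires $F(x\bfy)=F(xF(\bfy))$, which I establish by induction on $|\bfy|$. The base $\bfy=\varepsilon$ is condition (b); for the step, write $\bfy=\bfy'y_n$ and chain
\[
F(x\bfy) \;=\; F(F(x\bfy')y_n) \;=\; F(F(xF(\bfy'))y_n) \;=\; F(xF(F(\bfy')y_n)) \;=\; F(xF(\bfy)),
\]
using, in order, (d), the induction hypothesis, condition (c), and (d) in reverse. The main obstacle is the third equality: to apply (c) we need $|xF(\bfy')y_n|\leqslant m+2$, which holds precisely because $|F(\bfy')|\leqslant m$ by $m$-boundedness. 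The bound $m+2$ in (c) is thus calibrated exactly to what the induction step needs, and this length bookkeeping is the only non-routine aspect of the argument.
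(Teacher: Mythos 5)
Your proof is correct and takes essentially the same route as the paper: necessity by direct substitution into Eq.~\eqref{eq;assoc}, and sufficiency by reducing via Proposition~\ref{prop:xzleq1} to the cases $\length{\bfx\bfz}\leqslant 1$, handling $F\circ F=F$ with the chain through (a) and (d) on the $m$-bounded string $F(\bfy')y_n$, and handling $F(x\bfy)=F(xF(\bfy))$ by induction using (b), (c), (d) and $m$-boundedness. The length bookkeeping you flag as the key point ($|xF(\bfy')y_n|\leqslant m+2$) is exactly the step the paper relies on as well.
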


\begin{proof}
Conditions (a)--(d) clearly follow from associativity.
We prove that conditions (a)--(d) are sufficient.
By conditions (b)--(d) and Proposition~\ref{prop:xzleq1}, it is enough to show that $F\circ F=F$ and that $F(x\bfy z) = F(xF(\bfy z))$ for all $x\bfy z \in X^*$ such that $|x\bfy z|> m+2$. For the second assertion, we proceed by induction on $k=|x\bfy z|$. Assume that the condition holds for some $k\geqslant m+2$ and let $u\in X$. We then have
$$
F(x\bfy zu)
~=~ F(F(x\bfy z)u)
~=~ F(F(xF(\bfy z))u)
~=~ F(xF(F(\bfy z)u))
~=~ F(xF(\bfy zu)){\,},
$$
where the first equality is obtained by condition (d) and the other equalities by the induction hypothesis, condition (c), and the fact that $F$ is $m$-bounded.

It remains to prove that $F\circ F=F$, or equivalently, $F\circ F_k=F_k$ for every $k\in\N$. According to condition (a), we may assume that $k \geqslant m + 2$. Setting $\bfx = \bfy z$ such that $\length{\bfx}\geqslant m+2$, we have
\[
F(\bfx) = F(\bfy z) = F(F(\bfy) z) = F(F(F(\bfy) z)) = F(F(\bfy z))= F(F(\bfx)),
\]
where the second and the fourth equality are obtained by condition (d) and the third by condition (a) and the fact that $F$ is $m$-bounded.
\end{proof}

The following important result immediately follows from Proposition~\ref{prop:NSCfor-mBSA}. It gives necessary and sufficient conditions on $F_0,\ldots,F_{m+1}$ for an $m$-bounded function $F\colon X^*\to X^*$ to be associative.

\begin{theorem}\label{thm:NSC-1st}
Let $m\in\N$. For $k=0,\ldots,m+1$, let $F_k\colon X^k\to X^*$ be an $m$-bounded function. Then there exists an associative $m$-bounded function $G\colon X^*\to X^*$ such that $G_k=F_k$ for $k=0,\ldots,m+1$ if and only if conditions (a)--(c) of Proposition~\ref{prop:NSCfor-mBSA} hold. Such a function is then uniquely determined by the condition $G(\bfy z)=G(G(\bfy)z)$ for every $\bfy\in X^*$ and every $z\in X$.
\end{theorem}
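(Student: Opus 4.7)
The plan is to prove both implications of the equivalence and the uniqueness statement by leveraging Proposition~\ref{prop:NSCfor-mBSA} throughout. For necessity, suppose an associative $m$-bounded $G$ exists with $G_k=F_k$ for $k\leq m+1$. Then Proposition~\ref{prop:NSCfor-mBSA} gives that $G$ satisfies (a)--(d). Each of the conditions (a), (b), (c) only involves evaluations of $G$ on strings of length at most $m+2$, and on such strings $G$ is pinned down by $G_0,\ldots,G_{m+1}$ because $G$ is $m$-bounded; hence (a)--(c) transfer verbatim to conditions on the initial data $F_0,\ldots,F_{m+1}$.

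For sufficiency, I would construct $G$ by extending $F_0,\ldots,F_{m+1}$ via the very recurrence appearing in the conclusion. Explicitly, set $G_k=F_k$ for $k\leq m+1$ and, for $k\geq m+2$, define inductively
\[
G_k(x_1\cdots x_k) \;=\; G\bigl(G_{k-1}(x_1\cdots x_{k-1})\,x_k\bigr).
\]
The right-hand side is well defined because, by an induction on $k$ that mirrors the proof of Proposition~\ref{prop:mB}(a), each $G_k$ is $m$-bounded; the argument therefore has length at most $m+1$ and lies in the portion of $X^*$ on which $G$ has already been specified by the lower-arity parts.

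I would then show that $G$ is associative by verifying conditions (a)--(d) of Proposition~\ref{prop:NSCfor-mBSA} applied to $G$. Conditions (a), (b), and (c) for $G$ involve $G$-values only on strings of length at most $m+2$, on which $G$ is the amalgamation of $F_0,\ldots,F_{m+1}$; hence they follow directly from the assumed hypotheses. Condition (d), in the form of equation~(\ref{eqn:bla2}), holds by construction for $n\geq m+2$; for $n\leq m+1$ I anticipate that it can be extracted from (a)--(c) by induction on $n$, using (c) to shift the inner $G$ past a concatenation and (b) to handle the base cases involving $G(\varepsilon)$. Establishing (d) for these small values of $n$ is the step I expect to be the main obstacle, since it is the only place where the finite initial data must conspire to produce a global identity rather than a trivially-length-bounded one.

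Finally, uniqueness is immediate: iterating the recurrence $G(\bfy z)=G(G(\bfy)z)$ reduces any evaluation $G(\bfx)$ with $|\bfx|>m+1$ to an evaluation on a string of length at most $m+1$, which is determined by the given $F_0,\ldots,F_{m+1}$.
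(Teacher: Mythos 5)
Your skeleton for sufficiency---extend $F_0,\ldots,F_{m+1}$ by the recurrence, check $m$-boundedness as in Proposition~\ref{prop:mB}(a), then verify conditions (a)--(d) of Proposition~\ref{prop:NSCfor-mBSA}---is the natural one, and your necessity and uniqueness arguments are fine. But the step you flag as ``the main obstacle'' is a genuine gap that cannot be closed: Eq.~\eqref{eqn:bla2} for $1\leqslant n\leqslant m+1$ is a nontrivial constraint on the initial data that does \emph{not} follow from (a)--(c). Concretely, take $m=1$, $X=\{a,b\}$, $F_0(\varepsilon)=\varepsilon$, $F_1(a)=F_1(b)=a$, and $F_2(aa)=F_2(ab)=F_2(ba)=a$, $F_2(bb)=\varepsilon$. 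Condition (a) holds since $a$ and $\varepsilon$ are fixed points; (b) is vacuous because $F(\varepsilon)=\varepsilon$; and in (c) both sides always evaluate to $a$ (check the cases $\bfy=\varepsilon$ and $\bfy\in X$). Yet any associative extension $G$ would satisfy $G(bb)=G(G(b)b)=G(ab)=a\neq\varepsilon=F_2(bb)$, so no associative $1$-bounded extension of these parts exists. The tools you propose for the induction indeed do not reach the needed identity: condition (c) rewrites $F(F(x\bfy)z)$ as $F(xF(\bfy z))$ only when the left factor $x$ is a genuine letter, and condition (b) controls $F(xF(\varepsilon))$ with $F(\varepsilon)$ on the \emph{right}, whereas Eq.~\eqref{eqn:bla2} at $n=1$ requires $F(F(\varepsilon)x)$.

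The repair is to add to the hypotheses the instances of Eq.~\eqref{eqn:bla2} with $1\leqslant n\leqslant m+1$, equivalently $F(\bfy z)=F(F(\bfy)z)$ for all $\bfy\in X^*$ with $\length{\bfy}\leqslant m$ and all $z\in X$; this is expressible in terms of $F_0,\ldots,F_{m+1}$ alone, and with it your argument goes through verbatim, since condition (d) then holds for all $n$ (by hypothesis for $n\leqslant m+1$, by construction for $n\geqslant m+2$) and Proposition~\ref{prop:NSCfor-mBSA} applies. For comparison, the paper gives no proof of this theorem beyond asserting that it ``immediately follows'' from Proposition~\ref{prop:NSCfor-mBSA}; the difficulty you isolated is exactly the point where that assertion breaks down, so your honest flagging of it is to your credit, but the proof as proposed is incomplete and the statement needs the extra hypothesis above.
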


\begin{remark}
Let $F\colon X^*\to X^*$ be an associative $m$-bounded function and let $k\in\{0,\ldots,m\}$. From Proposition~\ref{prop:NSCfor-mBSA} it follows that if we replace $F_j$ with the identity function on $X^j$ for $j=0,\ldots,k$, then the resulting function is still associative and $m$-bounded.
\end{remark}

It is clear that the identity function on $X^*$ is an associative and standard function that is not $m$-bounded for any $m\in\N$. The following examples provide other instances of associative and standard functions that are not $m$-bounded.

\begin{example}\label{ex:unbounded}
Let $|$ be a fixed letter of the alphabet $X$, and define the string function $F\colon X^* \to X^*$ by the following procedure:
given an input string, insert the letter $|$ between any two consecutive letters neither one of which is $|$.
For example,
$$
F(a) ~=~ a{\,},\quad F(ab) ~=~ F(a|b) ~=~ a|b{\,},\quad F(||) ~=~ ||{\,},\quad F(||ab|||cd) ~=~ ||a|b|||c|d{\,}.
$$
It is an easy exercise to verify that the function $F$ is associative and standard.
It is also clear that $F$ is not $m$-bounded for any $m\in\N$.
\end{example}

\begin{example}\label{ex:lengthpres}
Let $m\in\N$ and let $c \in X$. Assume that $F \colon X^* \to X^*$ is an associative function that satisfies $|F_k(\bfx)| = k$ for $k=0,\ldots,m$. The function $G \colon X^* \to X^*$ defined by $G_0 = F_0, \dots, G_m = F_m$, and $G_k = c^k$ for every $k\geqslant m+1$, is associative and standard.
\end{example}

\begin{remark}
It is an open problem whether Example~\ref{ex:lengthpres} would remain true if we replaced the equality $|F_k(\bfx)| = k$ with the inequality $|F_k(\bfx)| \leqslant k$.
\end{remark}

We end this section by investigating the associative functions which are injective. Actually, as the following result shows, associative functions are never injective (except the identity function) and hence cannot be used as coding functions.

\begin{proposition}\label{prop:klm}
If $F \colon X^* \to X^*$ is injective and satisfies $F = F \circ F$, then it is equal to the identity. In particular, any associative and injective function $F \colon X^* \to X^*$ is equal to the identity.
\end{proposition}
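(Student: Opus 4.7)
The plan is to observe that the assumption $F = F\circ F$ gives, for every $\bfx \in X^*$, the equality $F(F(\bfx)) = F(\bfx)$. Applying the injectivity of $F$ to the outermost occurrence of $F$ on each side then collapses this into $F(\bfx) = \bfx$, so $F = \id$. This takes only one line; the work is already done.

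For the ``in particular'' clause, I would simply invoke Fact~\ref{fact:45u}(a), which says that every associative $F \colon X^* \to X^*$ is idempotent with respect to composition, i.e.\ $F\circ F = F$. Combined with injectivity, the first part of the proposition applies and yields $F = \id$.

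There is no real obstacle here: the only subtlety to flag is that injectivity is applied to arbitrary strings $\bfx \in X^*$ (including $\varepsilon$), which is automatic since $F$ is defined on all of $X^*$. No case distinction or induction is needed, so the proof will be essentially a two-sentence argument.
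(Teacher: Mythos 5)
Your proposal is correct and matches the paper's argument: the paper cancels the outer $F$ in $F = F\circ F$ by ``applying $F^{-1}$'' (i.e.\ by injectivity) to conclude $F=\id$, and likewise derives the second claim from Fact~\ref{fact:45u}(a). No differences worth noting.
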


\begin{proof}
Applying $F^{-1}$ to both sides of $F=F\circ F$ immediately shows that $F$ is the identity function. The second statement follows immediately by Fact~\ref{fact:45u}(a).
\end{proof}

\begin{remark}
Proposition~\ref{prop:klm} can be refined as follows. Suppose that $F \colon X^* \to X^*$ satisfies $F = F \circ F$, $\ran(F_k)\subseteq X^k$, and $F_k$ is injective for some $k\in\N$. Then $F_k=\restr{\id}{X^k}$.
\end{remark}

Proposition~\ref{prop:klm} raises the question of measuring how far an associative function different from the identity is from being injective. The following proposition shows that such a function is in a sense highly non-injective.

\begin{definition}
Let $\preceq$ be the quasiorder (i.e., reflexive and transitive binary relation) defined on the set of string functions by setting $F\preceq G$ if $\ker(G)\subseteq \ker(F)$, that is,
$$
G(\bfx) = G(\bfy) \quad\implies\quad F(\bfx) = F(\bfy),\qquad \bfx,{\,}\bfy\in X^*.
$$
We denote by $\prec$ the irreflexive part of $\preceq$.
\end{definition}

\begin{proposition}
Let $F\colon X^* \to X^*$ be an associative function different from the identity. Then there is an infinite sequence of associative functions $(F^m\colon X^*\to X^*)_{m\geqslant 1}$ such that $F\preceq F^1\prec F^2\prec \cdots \prec \id$.
\end{proposition}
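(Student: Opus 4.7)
The plan is to exhibit a strictly decreasing sequence of monoid congruences on $X^*$ wedged between the diagonal $\Delta$ and $\ker(F)$, and then to realize each such congruence as the kernel of an associative string function via a choice of class representatives. First, I would verify that $\ker(F)$ is a congruence of the free monoid $X^*$: setting $\bfx=\varepsilon$ and then $\bfz=\varepsilon$ in the associativity equation gives $F(\bfx\bfy) = F(F(\bfx)F(\bfy))$ for all $\bfx,\bfy\in X^*$, from which the congruence property follows at once. Since $F\neq\id$, Fact~\ref{fact:45u}(a) combined with Proposition~\ref{prop:klm} shows that $F$ is not injective, so there exist $\mathbf{a},\mathbf{b}\in X^*$ with $\mathbf{a}\neq\mathbf{b}$, $F(\mathbf{a})=F(\mathbf{b})$, and (after possibly swapping) $\length{\mathbf{a}}\leqslant\length{\mathbf{b}}$.

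Next, for each integer $n\geqslant 0$ I would introduce
\[
\bfx\,\rho_n\,\bfy \quad\iff\quad \bfx=\bfy \ \text{ or }\ \bigl(F(\bfx)=F(\bfy) \text{ and } \length{\bfx},\length{\bfy}>n\bigr),
\]
and check by a short case analysis that $\rho_n$ is a monoid congruence on $X^*$; both transitivity and compatibility with concatenation reduce to the corresponding properties of $\ker(F)$ together with the fact that $\length{\bfu\bfv}\geqslant\max(\length{\bfu},\length{\bfv})$. The inclusions $\Delta\subseteq\rho_{n+1}\subseteq\rho_n\subseteq\ker(F)$ are immediate from the definition.

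The crucial step is to realize each $\rho_n$ as the kernel of an associative function: choose any system of representatives for the $\rho_n$-classes and let $F^{(n)}\colon X^*\to X^*$ send each string to the representative of its class. Then $\ker(F^{(n)})=\rho_n$ by construction, and $F^{(n)}$ fixes every string of length at most $n$ (such strings form singleton classes). Associativity of $F^{(n)}$ now follows entirely from the congruence property: since $F^{(n)}(\bfy)$ is $\rho_n$-equivalent to $\bfy$, compatibility gives that $\bfx F^{(n)}(\bfy)\bfz$ is $\rho_n$-equivalent to $\bfx\bfy\bfz$, and hence $F^{(n)}$ assigns them the same representative.

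Finally, to secure the strict inclusions I would use the pair $(\mathbf{a},\mathbf{b})$: for every $n\geqslant\length{\mathbf{a}}$ and any letter $c\in X$, the pair $(\mathbf{a}c^{n+1-\length{\mathbf{a}}},\mathbf{b}c^{n+1-\length{\mathbf{a}}})$ witnesses $\rho_n\setminus\rho_{n+1}$, since its shorter component has length exactly $n+1$; and for every $n$, taking $k$ large enough places $(\mathbf{a}c^k,\mathbf{b}c^k)$ in $\rho_n\setminus\Delta$. Setting $F^m:=F^{(\length{\mathbf{a}}+m-1)}$ for $m\geqslant 1$ then yields the required chain $F\preceq F^1\prec F^2\prec\cdots\prec\id$. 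The only delicate point in the whole argument, and the expected main obstacle, is the case analysis establishing that $\rho_n$ is a congruence.
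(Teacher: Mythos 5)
Your proof is correct, and its core mechanism is the same as the paper's: build a strictly decreasing chain of monoid congruences wedged between the diagonal and $\ker(F)$, then turn each congruence into an associative function by choosing class representatives (associativity follows, exactly as you argue, from compatibility of the congruence with concatenation). What differs is the chain itself. The paper extracts a pair $(\bfx_0,\bfx_1)\in\ker(F)$ of distinct nonempty strings (treating the case where one member of the original pair is $\varepsilon$ separately, via $F(\bfx)=F(\bfx\bfy_1)$) and takes $\theta_m$ to be the substitution relation generated by $\bfx_0^{2^m}\leftrightarrow\bfx_1^{2^m}$; these are congruences essentially by construction, but the inclusion $\theta_0\subseteq\ker(F)$ and the strictness of $\theta_{m+1}\subset\theta_m$ implicitly rest on the same congruence property of $\ker(F)$ that you prove up front from $F(\bfx\bfy)=F(F(\bfx)F(\bfy))$ (which does hold for every associative $F$, with no hypothesis on $F(\varepsilon)$). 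Your relations $\rho_n$ --- the diagonal together with $\ker(F)$ restricted to strings of length greater than $n$ --- make the containment $\rho_n\subseteq\ker(F)$ immediate, reduce the congruence check to a routine case analysis, and give explicit witnesses $(\mathbf{a}c^{n+1-\length{\mathbf{a}}},\mathbf{b}c^{n+1-\length{\mathbf{a}}})$ for strictness; padding by powers of a letter also lets you dispense with the paper's separate treatment of the empty string. Both routes are sound; yours trades the paper's substitution congruences for a family whose verification is more transparent.
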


\begin{proof}
First, we note that there exists $(\bfx_0,\bfx_1)\in \ker(F)$ such that $\bfx_0\neq \bfx_1$ and $\varepsilon\not\in\{\bfx_0, \bfx_1\}$. Indeed, since $F$ is not injective there exists $(\bfy_0, \bfy_1)\in \ker(F)$ such that $\bfy_0\neq\bfy_1$. If $\bfy_0=\varepsilon$, it follows that for every $\bfx\in X^*$, we have $F(\bfx)=F(\bfx \varepsilon)=F(\bfx F(\varepsilon))=F(\bfx F(\bfy_1))=F(\bfx \bfy_1)$. Therefore, we can choose $(\bfx_0, \bfx_1)=(\bfx, \bfx\bfy_1)$, where $\bfx\neq\varepsilon$.

For any integer $m\geqslant 0$, denote by $\theta_m$ the equivalence relation defined as follows: we say that two strings are $\theta_m$-equivalent if one can be obtained from the other by substituting some occurrences of $\bfx_0^{2^m}$ with $\bfx_1^{2^m}$ and some occurrences of $\bfx_1^{2^m}$ with $\bfx_0^{2^m}$. It follows that $\theta_{m+1}\subset \theta_m$ for every $m\geqslant 0$ and by definition $\theta_0\subseteq\ker(F)$.

For every integer $m\geqslant 1$ we denote by $\pi_m$ the quotient map $\pi_m\colon X^*\to X^*/\theta_m$ and we let $g_m\colon X^*/\theta_m \to X^*$ be a map satisfying $g_m(\bfx/\theta_m) \in \bfx/\theta_m$. Let us prove that the sequence $(F^m\colon X^*\to X^*)_{m\geqslant 1}$ defined as $F^m=g_m\circ\pi_m$ satisfies the conditions of the statement. It is clear that $F^m$ maps any $\bfx\in X^*$ to a distinguished element in $\bfx/\theta_m$. Hence by definition we have $\ker(F^m)=\theta_m$ for all $m\geqslant 1$. Thus, it remains the prove that the functions $F^m$ are associative.

Let $m\geqslant 1$ and let $\bfx,\bfy,\bfz\in X^*$. Since $F^m(\bfy)=(g_m\circ\pi_m)(\bfy)$ we obtain that $\bfy$ and $F^m(\bfy)$ are $\theta_m$-equivalent. It follows easily from the definition of $\theta_m$ that the strings $\bfx\bfy\bfz$ and $\bfx F^m(\bfy)\bfz$ are $\theta_m$-equivalent, that is, $F^m(\bfx\bfy\bfz)=F^m(\bfx F^m(\bfy)\bfz)$.
\end{proof}

\section{Preassociative functions}

The concept of preassociativity has been recently introduced in \cite{MarTeh} for standard variadic functions $F\colon X^*\to Y$. Actually, this concept can be immediately applied to every variadic function $F\colon X^*\to Y$.

\begin{definition}
We say that a function $F\colon X^*\to Y$ is \emph{preassociative} if it satisfies the condition
$$
F(\bfy) = F(\bfy') \quad\implies\quad F(\bfx \bfy \bfz) = F(\bfx \bfy' \bfz){\,},\qquad \bfx,\bfy,\bfy',\bfz\in X^*.
$$
\end{definition}

\begin{fact}\label{fact:45u2}
Let $F\colon X^*\to Y$ be a preassociative function. If $F$ is not standard, then there exists $\mathbf{a}\in X^*\setminus\{\varepsilon\}$ such that $F(\bfx\bfz)=F(\bfx\mathbf{a}\bfz)$ for every $\bfx,\bfz\in X^*$.
\end{fact}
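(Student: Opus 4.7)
The proof should be very short; the statement is essentially a direct translation of non-standardness through the definition of preassociativity, and mirrors Fact~\ref{fact:45u}(c) for associative functions.

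First I would unpack what it means for $F$ to fail to be standard. The standardness condition is a biconditional $F(\bfx) = F(\varepsilon) \iff \bfx = \varepsilon$. The direction $\Leftarrow$ is a tautology, so if $F$ is not standard, the forward direction must fail. Hence there exists some string $\mathbf{a} \in X^* \setminus \{\varepsilon\}$ with $F(\mathbf{a}) = F(\varepsilon)$.

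Next I would feed this equality into the preassociativity axiom. Taking $\bfy = \mathbf{a}$ and $\bfy' = \varepsilon$ in the implication
\[
F(\bfy) = F(\bfy') \quad\implies\quad F(\bfx \bfy \bfz) = F(\bfx \bfy' \bfz),
\]
we obtain, for every $\bfx, \bfz \in X^*$, the identity $F(\bfx \mathbf{a} \bfz) = F(\bfx \varepsilon \bfz) = F(\bfx \bfz)$. Since $\mathbf{a} \neq \varepsilon$, this is exactly the conclusion of the fact.

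There is essentially no obstacle here; the only thing to be careful about is noting that the reverse direction of the standardness biconditional is automatic, so the failure of standardness really does provide a nonempty witness $\mathbf{a}$, rather than merely some $\bfx$ with $F(\bfx) \neq F(\varepsilon)$ satisfying $\bfx = \varepsilon$ (which is vacuous).
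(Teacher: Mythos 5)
Your proof is correct and is exactly the intended argument (the paper states this as a Fact without proof, and the analogous Fact~\ref{fact:45u}(c) is handled the same way): non-standardness yields a nonempty $\mathbf{a}$ with $F(\mathbf{a})=F(\varepsilon)$, and preassociativity with $\bfy=\mathbf{a}$, $\bfy'=\varepsilon$ gives the conclusion. Your remark that the $\Leftarrow$ direction of the standardness biconditional is automatic, so its failure genuinely produces a nonempty witness, is the one point worth making explicit, and you made it.
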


\begin{example}\label{ex:SPA64}
The function $F\colon X^*\to\N$ defined by $F(\bfx)=|\bfx|$ (number of letters in $\bfx$) is preassociative and standard. For every $a\in X$, the function $F\colon X^*\to\N$ defined by $F(\bfx)=|F_a(\bfx)|$ (number of letters in $\bfx$ distinct from $a$), where $F_a$ is defined in Example~\ref{ex:LetterRemov56}, is preassociative but not standard. For every $a\in X$, the function $F\colon X^*\to\N$ defined by $F(\bfx)=|G_a(\bfx)|$, where $G_a$ is defined in Example~\ref{ex:LetterRemov56}, is not preassociative. Indeed, for every $b\in X\setminus\{a\}$, we have $F(b)=F(\varepsilon)=1$ but $F(b^2)=2\neq 1=F(b)$. The function $F\colon X^*\to\N$ defined by $F(\bfx)=|\ofo(\bfx)|$ (number of distinct letters in $\bfx$), where $\ofo$ is defined in Example~\ref{ex:DuplRemov56}, is not preassociative. Indeed, for distinct $a,b\in X$, we have $F(a)=F(b)=1$ but $F(aa)=1\neq 2=F(ab)$. Finally, for every $a\in X$, the functions $F_a$ and $G_a$ are preassociative but not standard. The function $\ofo$ is preassociative and standard.
\end{example}

\begin{remark}
Example~\ref{ex:SPA64} motivates the following open question. Find necessary and sufficient conditions on an associative function $F\colon X^*\to X^*$ for the function $\bfx\mapsto |F(\bfx)|$ to be preassociative.
\end{remark}

The following two assertions are straightforward adaptations of results reported in \cite[Propositions~4.3 and 4.5]{MarTeh}.

\begin{proposition}\label{prop:MT4.5}
Let $F\colon X^*\to Y$ be a preassociative (resp.\ preassociative and standard) function and let $g\colon Y\to Y'$ be a function. If $\restr{g}{\ran(F)}$ is injective, then the function $H\colon X^*\to Y'$ defined as $H=g\circ F$ is preassociative (resp.\ preassociative and standard).
\end{proposition}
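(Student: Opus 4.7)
The plan is to unwind both implications through the injectivity hypothesis on $\restr{g}{\ran(F)}$, using preassociativity of $F$ as the essential input.

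First I would address the preassociativity of $H = g\circ F$. Suppose $\bfx,\bfy,\bfy',\bfz \in X^*$ satisfy $H(\bfy) = H(\bfy')$, i.e., $g(F(\bfy)) = g(F(\bfy'))$. Since $F(\bfy)$ and $F(\bfy')$ both lie in $\ran(F)$, the assumed injectivity of $\restr{g}{\ran(F)}$ forces $F(\bfy) = F(\bfy')$. Preassociativity of $F$ then yields $F(\bfx\bfy\bfz) = F(\bfx\bfy'\bfz)$, and applying $g$ to both sides gives $H(\bfx\bfy\bfz) = H(\bfx\bfy'\bfz)$, as required.

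Next I would handle the parenthetical \emph{standard} case. Assuming $F$ is standard, the forward implication $\bfx = \varepsilon \Rightarrow H(\bfx) = H(\varepsilon)$ is trivial. For the converse, suppose $H(\bfx) = H(\varepsilon)$, i.e., $g(F(\bfx)) = g(F(\varepsilon))$. Again $F(\bfx), F(\varepsilon) \in \ran(F)$, so injectivity of $\restr{g}{\ran(F)}$ gives $F(\bfx) = F(\varepsilon)$, and standardness of $F$ then yields $\bfx = \varepsilon$. Hence $H$ is standard.

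No step looks genuinely difficult: the whole argument is a transfer along $g$ using that $g$ is injective precisely on the set where it matters, namely $\ran(F)$. The only thing to be careful about is to invoke injectivity of $g$ only on elements that are values of $F$, which is ensured here because every argument of $g$ appearing in the proof is of the form $F(\cdot)$.
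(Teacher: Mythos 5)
Your argument is correct and is exactly the straightforward transfer the paper has in mind (the paper omits the proof, citing it as a routine adaptation of \cite{MarTeh}): injectivity of $\restr{g}{\ran(F)}$ pulls $H(\bfy)=H(\bfy')$ back to $F(\bfy)=F(\bfy')$, and preassociativity (resp.\ standardness) of $F$ then pushes forward along $g$. Nothing is missing.
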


\begin{proposition}\label{prop:preidem}
A function $F \colon X^* \to X^*$ is associative if and only if it is preassociative and satisfies $F = F \circ F$.
\end{proposition}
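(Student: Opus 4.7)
My plan is to prove both directions directly from the definitions, using Fact~\ref{fact:45u}(a) for one direction and the identity $F = F \circ F$ as the key to unlock preassociativity for the other direction.

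For the forward implication, I would assume $F$ is associative. The idempotency $F = F \circ F$ is then immediate from Fact~\ref{fact:45u}(a). To establish preassociativity, I would suppose $F(\bfy) = F(\bfy')$ for some $\bfy, \bfy' \in X^*$, and for any $\bfx, \bfz \in X^*$ apply associativity twice to obtain
\[
F(\bfx \bfy \bfz) = F(\bfx F(\bfy) \bfz) = F(\bfx F(\bfy') \bfz) = F(\bfx \bfy' \bfz),
\]
where the middle equality uses the hypothesis $F(\bfy) = F(\bfy')$.

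For the backward implication, I would assume that $F$ is preassociative and $F = F \circ F$. The trick is to observe that the idempotency $F = F \circ F$ says exactly that $F(F(\bfy)) = F(\bfy)$ for every $\bfy \in X^*$. Taking $\bfy' := F(\bfy)$, the hypothesis of preassociativity is satisfied, so for all $\bfx, \bfz \in X^*$ we may conclude $F(\bfx \bfy \bfz) = F(\bfx F(\bfy) \bfz)$, which is exactly Equation~(\ref{eq;assoc}).

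No step here looks delicate; the entire argument hinges on recognizing that idempotency of $F$ provides, for every $\bfy$, a canonical witness $\bfy' = F(\bfy)$ with $F(\bfy) = F(\bfy')$, which is precisely what preassociativity needs as input to deliver associativity. The proof should fit in a few lines.
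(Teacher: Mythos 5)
Your proof is correct and is essentially the standard argument (the paper itself omits the proof, deferring to \cite{MarTeh}, where the same reasoning appears): idempotency from Fact~\ref{fact:45u}(a) plus two applications of \eqref{eq;assoc} give preassociativity, and conversely the witness $\bfy' = F(\bfy)$ supplied by $F = F \circ F$ feeds preassociativity to recover \eqref{eq;assoc}. Nothing is missing.
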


We now define a new concept which will prove to be closely related to $m$-bounded string functions (see Proposition~\ref{prop:Hmb-Fmr51}).

\begin{definition}
Let $m\in\N$. We say that a map $F\colon X^*\to Y$ has an \emph{$m$-determined range} if $\ran(F)=\bigcup_{k=0}^m\ran(F_k)$.
\end{definition}

We immediately observe that the property of having an $m$-determined range is preserved under left composition with unary maps: if $F\colon X^*\to Y$ has an $m$-determined range, then so has $g\circ F$ for any map $g\colon Y\to Y'$, where $Y'$ is an nonempty set.

\begin{proposition}\label{prop:Hmb-Fmr51}
Let $m\in\N$. Any map $F\colon X^*\to Y$ satisfying $F=F\circ H$, where $H\colon X^*\to X^*$ is $m$-bounded, has an $m$-determined range.
\end{proposition}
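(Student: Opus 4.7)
The plan is to establish the equality $\ran(F) = \bigcup_{k=0}^m \ran(F_k)$ by proving both inclusions, of which only one is non-trivial.

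The inclusion $\bigcup_{k=0}^m \ran(F_k) \subseteq \ran(F)$ is immediate from the fact that each $F_k$ is the restriction $\restr{F}{X^k}$, so every value taken by some $F_k$ is in particular a value taken by $F$.

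For the reverse inclusion, I would start from an arbitrary $y \in \ran(F)$, so $y = F(\bfx)$ for some $\bfx \in X^*$. The key move is to use the hypothesis $F = F \circ H$ to rewrite $y = F(H(\bfx))$. Since $H$ is $m$-bounded, we have $|H(\bfx)| \leqslant m$, hence $H(\bfx) \in X^k$ for some $k \in \{0, 1, \ldots, m\}$. Therefore $y = F(H(\bfx)) = F_k(H(\bfx)) \in \ran(F_k) \subseteq \bigcup_{j=0}^m \ran(F_j)$, which completes the argument.

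There is no real obstacle here: the proposition is essentially a direct unwinding of the definitions of ``$m$-bounded'' and ``$m$-determined range,'' with the hypothesis $F = F \circ H$ serving precisely to route every output of $F$ through an input of length at most $m$. The only thing worth stating carefully is the case $\bfx = \varepsilon$, which is automatically handled since $H(\varepsilon) \in X^0 \cup \cdots \cup X^m$ as well.
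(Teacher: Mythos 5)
Your proof is correct and follows exactly the same route as the paper's: the hypothesis $F=F\circ H$ routes each value $F(\bfx)=F(H(\bfx))$ through the input $H(\bfx)$ of length at most $m$, giving $\ran(F)\subseteq\bigcup_{k=0}^m\ran(F_k)$, while the reverse inclusion is immediate from $F_k=\restr{F}{X^k}$. Nothing to add.
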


\begin{proof}
Let $F\colon X^*\to Y$ be a function satisfying $F=F\circ H$, where $H\colon X^*\to X^*$ is $m$-bounded, and let $\bfx\in X^*$. Since $H$ is $m$-bounded, there exists $k\in\{0,\ldots,m\}$ such that $F(\bfx)=(F\circ H)(\bfx)=(F_k\circ H)(\bfx)$. Therefore, we have $\ran(F)\subseteq\bigcup_{k=0}^m\ran(F_k)$. Since the other inclusion is obvious, $F$ has an $m$-determined range.
\end{proof}

We now give a characterization of the preassociative (resp.\ preassociative and standard) functions $F\colon X^*\to Y$ as compositions of the form $F=f\circ H$, where $H\colon X^*\to X^*$ is associative (resp.\ associative and standard) and $f\colon\ran(H)\to Y$ is injective. This result answers a question raised in \cite{MarTeh} and is stated in Theorem~\ref{factorizationPA} below.

First recall that a function $g$ is a \emph{quasi-inverse} \cite[Sect.~2.1]{SchSkl83} of a function $f$ if
$$
f\circ \restr{g}{\ran(f)}=\restr{\id}{\ran(f)}\qquad\mbox{and}\qquad\ran(\restr{g}{\ran(f)})=\ran(g).
$$
The set of quasi-inverses of a function $f$ is denoted by $Q(f)$. Under the assumption of the Axiom of Choice (AC), the set $Q(f)$ is nonempty for any function $f$. In fact, the Axiom of Choice is just another form of the statement ``every function has a quasi-inverse''. Note also that the relation of being quasi-inverse is symmetric: if $g \in Q(f)$ then $f \in Q(g)$; moreover, we have $\ran(g)\subseteq\dom(f)$ and $\ran(f)\subseteq\dom(g)$ and the functions $\restr{f}{\ran(g)}$ and $\restr{g}{\ran(f)}$ are injective.

The following two lemmas are extensions of results reported in \cite[Proposition~2.2 and Lemma~4.8]{MarTeh}.

\begin{lemma}\label{lem:gfr}
Assume AC and let $F \colon X^* \to Y$ be a function. For any $g \in Q(F)$, define the function $H \colon X^* \to X^*$ by $H= g \circ F$. Then the following conditions hold.
\begin{enumerate}
\item[(a)] We have $F = F \circ H$, $H = H \circ H$, and the map $\restr{F}{\ran(H)}$ is injective.

\item[(b)] $F$ is standard if and only if so is $H$. In either case, we have $H(\varepsilon)=\varepsilon$.
\end{enumerate}
Moreover, if $F$ has an $m$-determined range for some $m\in\N$, then $g$ can always be chosen so that $\ran(g)\subseteq\bigcup_{k=0}^mX^k$ and therefore $H$ is $m$-bounded. Conversely, if $H$ is $m$-bounded for some $m\in\N$, then $F$ has an $m$-determined range.
\end{lemma}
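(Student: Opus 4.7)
The plan is to rely on the defining identities of a quasi-inverse, $F\circ \restr{g}{\ran(F)}=\restr{\id}{\ran(F)}$ and $\ran(\restr{g}{\ran(F)})=\ran(g)$, together with the symmetric observation $F\in Q(g)$ recalled just before the lemma. A useful preliminary remark is that
\begin{equation*}
\ran(H)\;=\;g(\ran(F))\;=\;\ran(\restr{g}{\ran(F)})\;=\;\ran(g),
\end{equation*}
so that the injectivity claim in (a) reduces to the injectivity of $\restr{F}{\ran(g)}$, which is already noted.

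For (a), the identity $F=F\circ H$ is immediate from $F\circ \restr{g}{\ran(F)}=\restr{\id}{\ran(F)}$ applied to $F(\bfx)\in\ran(F)$, and then $H\circ H=g\circ(F\circ g\circ F)=g\circ F=H$ is the same identity used once more. For part (b), I would first secure $H(\varepsilon)=\varepsilon$: when $F$ is standard, $F^{-1}(F(\varepsilon))=\{\varepsilon\}$ forces $g(F(\varepsilon))=\varepsilon$; in the general case, since $\varepsilon\in F^{-1}(F(\varepsilon))$, one exploits the freedom in the construction of the quasi-inverse to set $g(F(\varepsilon))=\varepsilon$. With this in hand, the equivalence of standardness follows from the chain
\begin{equation*}
H(\bfx)=H(\varepsilon)\;\iff\;F(H(\bfx))=F(H(\varepsilon))\;\iff\;F(\bfx)=F(\varepsilon),
\end{equation*}
in which the first step uses the injectivity of $\restr{F}{\ran(H)}$ from (a) and the second uses $F=F\circ H$; comparing the outer terms with the defining condition of standardness gives both implications at once.

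For the moreover clause, if $F$ has an $m$-determined range then each $y\in\ran(F)$ lies in $\ran(F_k)$ for some $k\leqslant m$, so selecting a preimage $g(y)\in F_k^{-1}(y)\subseteq X^k$ produces a quasi-inverse with $\ran(g)\subseteq\bigcup_{k=0}^m X^k$; since $\ran(H)=\ran(g)$, the function $H$ is then $m$-bounded. The converse is Proposition~\ref{prop:Hmb-Fmr51} applied to the relation $F=F\circ H$ from (a). The only delicacy in the entire argument is coordinating the several simultaneous constraints on $g$ (namely $g(F(\varepsilon))=\varepsilon$ and the shortest-preimage selections), but these are imposed at distinct points of $\ran(F)$ and are therefore mutually compatible.
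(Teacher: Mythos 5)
Your proof is correct and follows essentially the same route as the paper: the identities in (a) come straight from the quasi-inverse equations together with $\ran(H)=\ran(g)$, the equivalence in (b) from $F=F\circ H$ and the injectivity of $\restr{F}{\ran(H)}$, and the moreover clause from the explicit choice $g(y)\in F_k^{-1}\{y\}\subseteq X^k$ in one direction and Proposition~\ref{prop:Hmb-Fmr51} in the other. One small correction: the clause ``in either case, $H(\varepsilon)=\varepsilon$'' refers only to the two (equivalent) standard cases, so there is no ``general case'' to handle; indeed, for non-standard $F$ and an arbitrary $g\in Q(F)$ the equality $H(\varepsilon)=\varepsilon$ can fail (take $F$ constant and $g$ sending its value to a nonempty string), and you cannot ``exploit the freedom'' in choosing $g$, since the lemma quantifies over all $g\in Q(F)$. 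Your biconditional chain for (b) is in fact slightly tidier than the paper's two separate implications and does not need $H(\varepsilon)=\varepsilon$ at all, so simply dropping the general-case remark leaves a complete argument.
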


\begin{proof}
By definition of $H$ we have $F \circ H$ $=$ $F \circ g \circ F$ $=$ $F$, $H \circ H$ $=$ $g \circ F \circ g \circ F$ $=$ $g \circ F$ $=$ $H$, and the map $\restr{F}{\ran(g)}=\restr{F}{\ran(H)}$ is injective. Now, if $F$ is standard, then from the identity $F(H(\varepsilon))=F(\varepsilon)$ we immediately derive $H(\varepsilon)=\varepsilon$. Moreover, if $H(\bfx)=\varepsilon$, then we have $F(\bfx)=F(H(\bfx))=F(\varepsilon)$ and therefore $\bfx=\varepsilon$, which shows that $H$ is standard. Conversely, if $H$ is standard, then from the identity $F(\bfx)=F(\varepsilon)$ we obtain $H(\bfx)=(g\circ F)(\bfx)=(g\circ F)(\varepsilon)=H(\varepsilon)$ and therefore $\bfx=\varepsilon$, which shows that $F$ is standard.

Now, if $F$ has an $m$-determined range for some $m\in\N$, then there always exists $g\in Q(F)$ such that $\ran(g)\subseteq\bigcup_{k=0}^mX^k$; indeed, if $y\in\ran(F_k)$ for some $k\in\{0,\ldots,m\}$, then we can take $g(y)\in F_k^{-1}\{y\}\subseteq X^k$. Therefore $H=g\circ F$ is $m$-bounded. Conversely, if $H$ is $m$-bounded for some $m\in\N$, then $F$ has an $m$-determined range by Proposition~\ref{prop:Hmb-Fmr51}.
\end{proof}

\begin{lemma} \label{lem:bxd}
Assume AC and let $F \colon X^* \to Y$ be a function. The following assertions are equivalent.
\begin{enumerate}
\item[(i)] $F$ is preassociative (resp.\ preassociative and standard).
\item[(ii)] For every $g \in Q(F)$, the function $H \colon X^* \to X^*$ defined by $H = g \circ F$ is associative (resp.\ associative and standard).
\item[(iii)] There is $g \in Q(F)$ such that the function $H \colon X^* \to X^*$ defined by $H = g \circ F$ is associative (resp.\ associative and standard).
\end{enumerate}
For any $m\in\N$, the same equivalence holds if we add the condition that $F$ has an $m$-determined range in assertion (i) and the conditions $\ran(g)\subseteq\bigcup_{k=0}^mX^k$ and $H$ is $m$-bounded in assertions (ii) and (iii).
\end{lemma}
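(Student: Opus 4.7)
The plan is to prove the cyclic chain (i)$\Rightarrow$(ii)$\Rightarrow$(iii)$\Rightarrow$(i), with the addendum about $m$-determined range handled at the end by invoking the second part of Lemma~\ref{lem:gfr}. The implication (ii)$\Rightarrow$(iii) is immediate from AC, which guarantees that $Q(F)\neq\emptyset$. The standardness clauses in each implication follow from Lemma~\ref{lem:gfr}(b), so the real content is the associativity/preassociativity equivalence.

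For (i)$\Rightarrow$(ii), I fix $g\in Q(F)$ and set $H=g\circ F$. From Lemma~\ref{lem:gfr}(a) we have $F=F\circ H$, which means $F(H(\bfy))=F(\bfy)$ for every $\bfy\in X^*$. Applying preassociativity of $F$ with $\bfy$ and $H(\bfy)$ gives $F(\bfx H(\bfy)\bfz)=F(\bfx \bfy\bfz)$ for all $\bfx,\bfz\in X^*$. Composing with $g$ on both sides yields $H(\bfx H(\bfy)\bfz)=H(\bfx\bfy\bfz)$, which is exactly the associativity of $H$.

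For (iii)$\Rightarrow$(i), suppose $g\in Q(F)$ is such that $H=g\circ F$ is associative. If $F(\bfy)=F(\bfy')$, then $H(\bfy)=g(F(\bfy))=g(F(\bfy'))=H(\bfy')$. Using associativity of $H$ followed by the identity $F=F\circ H$ from Lemma~\ref{lem:gfr}(a), I compute
\[
F(\bfx\bfy\bfz)=F(H(\bfx\bfy\bfz))=F(H(\bfx H(\bfy)\bfz))=F(H(\bfx H(\bfy')\bfz))=F(H(\bfx\bfy'\bfz))=F(\bfx\bfy'\bfz),
\]
establishing preassociativity of $F$.

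For the $m$-determined range refinement, the second part of Lemma~\ref{lem:gfr} gives me exactly what I need: if $F$ has an $m$-determined range, I can choose $g\in Q(F)$ with $\ran(g)\subseteq\bigcup_{k=0}^m X^k$, making $H$ automatically $m$-bounded; and conversely $H$ being $m$-bounded forces $F$ to have an $m$-determined range. Plugging this into the three implications above yields the refined equivalence with no additional effort. There is no genuine obstacle here; the proof is essentially a bookkeeping exercise once Lemma~\ref{lem:gfr} is in hand, the only mild subtlety being to remember that $g$ acts on $\ran(F)$ and that $\restr{F}{\ran(H)}$ being injective is not needed for these implications (though it is implicitly what lets the associative $H$ encode all of $F$'s kernel information).
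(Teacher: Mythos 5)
Your proof is correct, but it takes a more self-contained route than the paper's. The paper derives (i)~$\Rightarrow$~(ii) by first noting $H=H\circ H$ (Lemma~\ref{lem:gfr}), then invoking Proposition~\ref{prop:MT4.5} (left composition with a map injective on $\ran(F)$ preserves preassociativity) to get that $H$ is preassociative, and finally Proposition~\ref{prop:preidem} (associative $=$ preassociative $+$ idempotent under composition) to conclude associativity; for (iii)~$\Rightarrow$~(i) it writes $F=(\restr{g}{\ran(F)})^{-1}\circ H$ and again applies Propositions~\ref{prop:preidem} and~\ref{prop:MT4.5}. You instead verify both implications directly from the definitions, using only $F=F\circ H$ from Lemma~\ref{lem:gfr}(a): for (i)~$\Rightarrow$~(ii) you feed the pair $(\bfy, H(\bfy))\in\ker F$ into preassociativity and postcompose with $g$, and for (iii)~$\Rightarrow$~(i) you sandwich two applications of associativity of $H$ between two applications of $F=F\circ H$. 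Your chain of equalities checks out, and it is a nice observation that the injectivity of $\restr{g}{\ran(F)}$ (hence of $\restr{F}{\ran(H)}$) is not actually needed for either direction, whereas the paper's (iii)~$\Rightarrow$~(i) uses it to invert $g$ on $\ran(F)$. What the paper's approach buys is modularity (the two propositions are reused elsewhere, e.g.\ in Theorem~\ref{factorizationPA}); what yours buys is a shorter dependency chain. The handling of standardness via Lemma~\ref{lem:gfr}(b) and of the $m$-determined-range refinement via the last part of Lemma~\ref{lem:gfr} matches the paper exactly.
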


\begin{proof}
(i) $\implies$ (ii).
Let $g \in Q(F)$ and $H = g \circ F$.
We know by Lemma~\ref{lem:gfr} that $H = H \circ H$.
Since $\restr{g}{\ran(F)}$ is injective, we have that $H$ is preassociative (resp.\ preassociative and standard) by Proposition~\ref{prop:MT4.5}.
It follows from Proposition~\ref{prop:preidem} that $H$ is associative (resp.\ associative and standard).

(ii) $\implies$ (iii).
Trivial.

(iii) $\implies$ (i).
By Proposition~\ref{prop:preidem}, $H$ is preassociative (resp.\ preassociative and standard).
Since $\restr{g}{\ran(F)}$ is an injective function from $\ran(F)$ onto $\ran(g) = \ran(H)$, we have $F = (\restr{g}{\ran(F)})^{-1} \circ H$. It follows from Proposition~\ref{prop:MT4.5} that $F$ is preassociative (resp.\ preassociative and standard).

The last part of the result follows from Lemma~\ref{lem:gfr}.
\end{proof}

\begin{theorem}\label{factorizationPA}
Assume AC and let $F \colon X^* \to Y$ be a function. The following conditions are equivalent.
\begin{enumerate}
\item[(i)] $F$ is preassociative (resp.\ preassociative and standard).
\item[(ii)] There exists an associative (resp.\ associative and standard) function $H \colon X^* \to X^*$ and an injective function $f \colon \ran(H) \to Y$ such that $F = f \circ H$.
\end{enumerate}
Moreover, we have the following.
\begin{enumerate}
\item[(a)] If condition (ii) holds, then we have $f = \restr{F}{\ran(H)}$, $f^{-1}\in Q(F)$, and we may choose $H = g \circ F$ for any $g \in Q(F)$.

\item[(b)] For any $m\in\N$, the equivalence between (i) and (ii) still holds if we add the condition that $F$ has an $m$-determined range in assertion (i) and the condition that $H$ is $m$-bounded in assertion (ii). In this case the condition $\ran(g)\subseteq\bigcup_{k=0}^mX^k$ must be added in statement (a).
\end{enumerate}
\end{theorem}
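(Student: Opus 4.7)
The plan is to view this theorem as a reformulation of Lemma \ref{lem:bxd} in which the quasi-inverse $g \in Q(F)$ is replaced by an injective factor $f$; the systematic bridge between the two formulations is to take $f = \restr{F}{\ran(H)}$.

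For (i) $\Rightarrow$ (ii), I would invoke AC to select any $g \in Q(F)$ and put $H := g \circ F$. Lemma \ref{lem:bxd} guarantees that $H$ is associative (and standard whenever $F$ is). Setting $f := \restr{F}{\ran(H)}$, the symmetry of the quasi-inverse relation and the identity $\ran(H) = \ran(g)$ imply that $\restr{F}{\ran(g)}$ is injective, so $f$ is injective. Finally, $F = f \circ H$ because $(f \circ H)(\bfx) = F(g(F(\bfx))) = F(\bfx)$ by the quasi-inverse relation $F \circ \restr{g}{\ran(F)} = \restr{\id}{\ran(F)}$. Conversely, (ii) $\Rightarrow$ (i) follows from Propositions \ref{prop:preidem} and \ref{prop:MT4.5}: $H$ associative is preassociative, and left composition with $f$ (injective on $\ran(H)$) preserves preassociativity; standardness transfers via the injectivity of $f$ and the standardness of $H$.

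For claim (a), assume (ii) holds. By Fact \ref{fact:45u}(a) one has $H \circ H = H$, so every $\bfy \in \ran(H)$ satisfies $H(\bfy) = \bfy$, hence $F(\bfy) = f(H(\bfy)) = f(\bfy)$, which identifies $f$ with $\restr{F}{\ran(H)}$. Because $f$ is a bijection from $\ran(H)$ onto $\ran(F) = f(\ran(H))$, the inverse $f^{-1} \colon \ran(F) \to \ran(H)$ is well-defined, and the two conditions in the definition of quasi-inverse reduce to the bijectivity just established, so $f^{-1} \in Q(F)$. The fact that one may take $H = g \circ F$ for any $g \in Q(F)$ is precisely the content of the implication (i) $\Rightarrow$ (ii) in Lemma \ref{lem:bxd}.

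For the refinement (b), the matching between $m$-determined range and $m$-boundedness is already encoded in Lemma \ref{lem:gfr}: starting from $F$ with an $m$-determined range, one may choose $g \in Q(F)$ with $\ran(g) \subseteq \bigcup_{k=0}^{m} X^k$, whence $H = g \circ F$ is $m$-bounded; conversely, if the $H$ of (ii) is $m$-bounded, the identity $F = F \circ H$ (which holds because $H \circ H = H$) together with Proposition \ref{prop:Hmb-Fmr51} forces $F$ to have an $m$-determined range. The only delicate checkpoint in the whole argument is the identification $f = \restr{F}{\ran(H)}$ and the ensuing membership $f^{-1} \in Q(F)$; once the idempotence of $H$ is used to recognise $H$ as the identity on its own range, both assertions reduce to direct unpacking of definitions.
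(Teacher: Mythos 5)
Your proposal is correct and follows essentially the same route as the paper: Lemmas \ref{lem:gfr} and \ref{lem:bxd} for (i) $\Rightarrow$ (ii), Propositions \ref{prop:preidem} and \ref{prop:MT4.5} for the converse, the idempotence $H \circ H = H$ to identify $f$ with $\restr{F}{\ran(H)}$ and deduce $f^{-1} \in Q(F)$ in (a), and Lemma \ref{lem:gfr} together with Proposition \ref{prop:Hmb-Fmr51} for (b). The only cosmetic remark is that in (a) the membership $f^{-1} \in Q(F)$ rests on the identification $f = \restr{F}{\ran(H)}$ (so that $F \circ f^{-1} = \restr{\id}{\ran(F)}$), not on bijectivity alone, but since you establish that identification first the argument is sound.
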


\begin{proof}
(i) $\implies$ (ii).
Let $H\colon X^*\to X^*$ be defined by $H=g\circ F$, where $g\in Q(F)$. By Lemma~\ref{lem:gfr} we have $F=f\circ H$, where $f=\restr{F}{\ran(H)}$ is injective. By Lemma~\ref{lem:bxd}, $H$ is associative (resp.\ associative and standard).

(ii) $\implies$ (i).
By Proposition~\ref{prop:preidem} we have that $H$ is preassociative (resp.\ preassociative and standard). Then also $F$ is preassociative (resp.\ preassociative and standard) by Proposition~\ref{prop:MT4.5}.

(a) If condition (ii) holds, then $F\circ H=f\circ H\circ H=f\circ H$ and hence $\restr{F}{\ran(H)}=\restr{f}{\ran(H)}=f$. Moreover, since $f$ is injective we have $H=f^{-1}\circ F$ and hence $F\circ f^{-1}\circ F=F\circ H=f\circ H\circ H=f\circ H=F$, which shows that $f^{-1}\in Q(F)$.

(b) Follows from Proposition~\ref{prop:Hmb-Fmr51} and Lemmas~\ref{lem:gfr} and \ref{lem:bxd}.
\end{proof}

\begin{example}
As already observed in \cite{MarTeh} and Example~\ref{ex:SPA64}, the function $F\colon X^*\to\N$ defined by $F(\bfx)=|\bfx|$ is preassociative and standard. The function $g\colon\N\to X^*$ defined by $g(n)=a^n$ for some fixed $a\in X$ is a quasi-inverse of $F$. The function $H=g\circ F$, from $X^*$ to $X^*$, is then defined by $H(\bfx)=a^{|\bfx|}$ and the function $f=\restr{F}{\ran(H)}$, from $\ran(H)$ to $\N$, is defined by $f(a^n)=n$. In accordance with Theorem~\ref{factorizationPA}, we have $F=f\circ H$, where $f$ is injective and $H$ is associative and standard.
\end{example}

\begin{remark}
\begin{enumerate}
\item[(a)] The restriction of Theorem~\ref{factorizationPA} to standard functions having a $1$-determined range was obtained in \cite[Theorem~4.9]{MarTeh}. Here we have extended this factorization result to any preassociative function.

\item[(b)] It is noteworthy that, by making an appropriate choice of $g\in Q(F)$ in Lemma~\ref{lem:gfr}, Lemma~\ref{lem:bxd}, and Theorem~\ref{factorizationPA}, the function $\restr{H}{F^{-1}(\ran(F_k))}$ can always be made $k$-bounded for every $k\in\N$. Indeed, for every function $F\colon X^*\to X^*$, define the map $\ell\colon\ran(F)\to\N$ by
    $$
    \ell(y) ~=~ \min\{j\in\N : X^j\cap F^{-1}\{y\}\neq\varnothing\}.
    $$
    We say that a quasi-inverse $g$ of $F$ is \emph{length-optimized} if $g(y)\in X^{\ell(y)}$ for every $y\in\ran(F)$. Under AC we have $\varnothing\neq Q_{\ell}(F)\subseteq Q(F)$, where $Q_{\ell}(F)$ denotes the set of length-optimized quasi-inverses of $F$. Now, under the assumptions of Lemma~\ref{lem:gfr}, if $g\in Q_{\ell}(F)$, then for every $k\in\N$ the function $\restr{H}{F^{-1}(\ran(F_k))}$ is $k$-bounded. Indeed, if $\bfx\in F^{-1}(\ran(F_k))$, then $k\in\{j\in\N : X^j\cap F^{-1}\{F(\bfx)\}\neq\varnothing\}$ and therefore $|H(\bfx)|=|g(F(\bfx))|=\ell(F(\bfx))\leqslant k$.
\end{enumerate}
\end{remark}

Combining Proposition~\ref{prop:mB} and Theorem~\ref{factorizationPA}, we immediately derive the following corollary.

\begin{corollary}
Assume AC and let $m\in\N$. Any preassociative function $F\colon X^*\to Y$ having an $m$-determined range is completely determined by its parts of arity at most $m+1$, i.e., if $G\colon X^*\to Y$ is a preassociative function having an $m$-determined range and such that $G_i=F_i$, for $i=0,\ldots,m+1$, then $F=G$.
\end{corollary}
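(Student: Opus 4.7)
The plan is to combine the two ingredients indicated by the corollary's heading: use Theorem~\ref{factorizationPA}(b) to replace each of $F$ and $G$ by its associative $m$-bounded ``core'' $H$, and then identify these cores via Proposition~\ref{prop:mB}(b). The hypothesis $F_i = G_i$ for $i = 0, \ldots, m+1$ will do double duty: it forces $\ran(F) = \ran(G)$ (so a single quasi-inverse can serve both functions), and it forces the small-arity parts of the two cores to coincide (which is exactly what Proposition~\ref{prop:mB}(b) needs).

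In detail, first I would note that since $F$ and $G$ both have $m$-determined range and agree on $X^k$ for $k \leqslant m$, we get $\ran(F) = \bigcup_{k=0}^{m} \ran(F_k) = \bigcup_{k=0}^{m} \ran(G_k) = \ran(G)$. For each $y$ in this common range let $k(y) \in \{0,\ldots,m\}$ be minimal with $y \in \ran(F_{k(y)})$; using the crucial equality $F_{k(y)} = G_{k(y)}$, the set $F_{k(y)}^{-1}\{y\} = G_{k(y)}^{-1}\{y\}$ is nonempty, and AC lets us choose $g(y)$ inside it. By construction $g$ lies in $Q(F) \cap Q(G)$ with $\ran(g) \subseteq \bigcup_{k=0}^{m} X^k$. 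Setting $H_F = g \circ F$ and $H_G = g \circ G$, Lemma~\ref{lem:bxd} (together with its length-bounded refinement) guarantees that $H_F$ and $H_G$ are both associative and $m$-bounded.

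Next, for each $i \leqslant m+1$ and each $\bfx \in X^i$,
\[
(H_F)_i(\bfx) = g(F_i(\bfx)) = g(G_i(\bfx)) = (H_G)_i(\bfx),
\]
so $(H_F)_i = (H_G)_i$ for $i = 0, \ldots, m+1$. Proposition~\ref{prop:mB}(b) then yields $H_F = H_G$; denote this common associative $m$-bounded function by $H$. Finally, Lemma~\ref{lem:gfr}(a) gives $F = F \circ H$ and $G = G \circ H$, and since $\ran(H) \subseteq \bigcup_{k=0}^{m} X^k$ with $F_k = G_k$ for all $k \leqslant m$, the restrictions $\restr{F}{\ran(H)}$ and $\restr{G}{\ran(H)}$ coincide. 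Hence $F(\bfx) = F(H(\bfx)) = G(H(\bfx)) = G(\bfx)$ for every $\bfx \in X^*$, i.e., $F = G$.

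The only delicate step is the simultaneous quasi-inverse construction, but it goes through cleanly because the hypothesis $F_k = G_k$ for $k \leqslant m$ turns what would otherwise be an intersection condition into an equality of fibers; everything else is a routine assembly of the lemmas and propositions established earlier.
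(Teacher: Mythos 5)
Your proof is correct and follows exactly the route the paper intends (the paper gives no explicit proof, merely stating that the corollary follows by combining Proposition~\ref{prop:mB} and Theorem~\ref{factorizationPA}); you correctly supply the one detail that is genuinely needed, namely that the hypothesis $F_k=G_k$ for $k\leqslant m$ lets a single quasi-inverse $g$ serve both $F$ and $G$, after which Proposition~\ref{prop:mB}(b) identifies the two associative $m$-bounded cores and the conclusion follows since $F$ and $G$ agree on $\ran(H)$.
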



\begin{remark}
If $F \colon X^* \to Y$ is preassociative and has an $m$-determined range for some $m\in\N$, then by combining Eq.~\eqref{eqn:bla2} with Theorem~\ref{factorizationPA} we see that $F$ can be computed recursively from $F_0,\ldots,F_{m+1}$ by
$$
F_n(x_1\cdots x_n) ~=~ F((g\circ F_{n-1})(x_1\cdots x_{n-1}){\,}x_n),\qquad n\geqslant m+2,
$$
where $g\in Q(F)$ satisfies $\ran(g)\subseteq\bigcup_{k=0}^mX^k$.
For instance, let $F\colon\R^*\to\R\cup\{\varepsilon\}$ be the preassociative function having a $1$-determined range and such that $F(\varepsilon)=\varepsilon$, $F(x_1)=x_1$, and $F(x_1x_2)=x_1+x_2$ for all $x_1,x_2\in\R$. Then, the function $g\colon \R\cup\{\varepsilon\}\to \R^*$ defined by $g(\varepsilon)=\varepsilon$ and $g(x)=x$ for all $x\in\R$ is a quasi-inverse of $F$ and therefore we have
$$
F_3(x_1x_2x_3) ~=~ F((g\circ F_{2})(x_1x_2){\,}x_3) ~=~ x_1+x_2+x_3,
$$
and even $F_n(\bfx)=\sum_{i=1}^nx_i$ for every integer $n\geqslant 1$.
\end{remark}

We now provide necessary and sufficient conditions on the parts $F_0,\ldots,F_{m+1}$ for a function $F\colon X^*\to Y$ to be preassociative and have an $m$-determined range. The result is stated in Theorem~\ref{thm:we8ds6fs} below and follows from the next proposition.

\begin{proposition}\label{prop:34k3k4j5}
Assume AC and let $m\in\N$. A function $F\colon X^*\to Y$ is preassociative and has an $m$-determined range if and only if $\ran(F_{m+1})\subseteq\bigcup_{k=0}^m\ran(F_k)$ and there exists $g\in Q(F)$, with $\ran(g)\subseteq\bigcup_{k=0}^mX^k$, such that
\begin{enumerate}
\item[(a)] $F(x)=F(xH(\varepsilon))$ for all $x\in X$,

\item[(b)] $F(H(x\bfy)z)=F(xH(\bfy z))$ for all $x\in X$, $\bfy\in X^*$, and $z\in X$ such that $\length{x\bfy z}\leqslant m+2$,

\item[(c)] $F(\bfy z)=F(H(\bfy)z)$ for all $\bfy\in X^*$ and all $z\in X$,
\end{enumerate}
where $H=g\circ F$.
\end{proposition}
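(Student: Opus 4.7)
The proof splits into necessity and sufficiency. The guiding idea is that for any $g \in Q(F)$ and $H := g \circ F$, one has $F \circ H = F$ (because $F \circ g$ restricts to the identity on $\ran(F)$), so that applying $F$ to an identity for $H$ yields an identity for $F$, while conversely applying $g$ to an identity whose two sides already lie in $\ran(F)$ yields an identity for $H$. This correspondence transports the characterization of associative $m$-bounded functions given by Proposition~\ref{prop:NSCfor-mBSA} back and forth between $H$ and $F$.

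For necessity, I would assume $F$ is preassociative with an $m$-determined range. The last assertion of Lemma~\ref{lem:gfr} provides a $g \in Q(F)$ with $\ran(g) \subseteq \bigcup_{k=0}^m X^k$, so that $H = g \circ F$ is $m$-bounded, and Lemma~\ref{lem:bxd} then ensures that $H$ is associative. Applying Proposition~\ref{prop:NSCfor-mBSA} to $H$ yields conditions (a)--(d) for $H$; applying the outer $F$ to both sides of each of the resulting identities (b), (c), (d) and using $F \circ H = F$ yields the proposition's conditions (a), (b), (c) for $F$. The range inclusion $\ran(F_{m+1}) \subseteq \bigcup_{k=0}^m \ran(F_k)$ is immediate from the $m$-determined range hypothesis.

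For sufficiency, I would start from the right-hand conditions and set $H = g \circ F$. The $m$-determined range of $F$ follows at once: for every $y \in \ran(F)$ we have $y = F(g(y))$ with $g(y) \in X^j$ for some $j \leq m$, so $y \in \ran(F_j)$; moreover $H$ is $m$-bounded. By Lemma~\ref{lem:bxd}(iii)$\implies$(i), it suffices to prove that $H$ is associative, which I would do by verifying the four conditions of Proposition~\ref{prop:NSCfor-mBSA} for $H$: condition (a) reduces to $H \circ H = H$, which holds since $g \circ F \circ g \circ F = g \circ F$; conditions (b) and (c) for $H$ are obtained by applying $g$ to both sides of the stated (a) and (b), respectively; and condition (d) for $H$, in its form $H(\bfy z) = H(H(\bfy) z)$, is obtained by applying $g$ to both sides of the stated (c).

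I do not foresee a serious obstacle. The only bookkeeping point is that every identity to which $g$ is applied must have both sides of the form $F(\cdot)$, so that $g \circ F = H$ is produced on each side; this holds throughout (a)--(c) of the proposition, so the derivations go through cleanly in both directions.
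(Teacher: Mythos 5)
Your proof is correct and follows essentially the same route as the paper: both directions reduce the claim to the associativity of $H=g\circ F$ via Lemma~\ref{lem:bxd} and the characterization of associative $m$-bounded functions in Proposition~\ref{prop:NSCfor-mBSA}, transporting each identity between $F$ and $H$ by composing with $F$ (using $F\circ H=F$) in one direction and with $g$ in the other. Your derivation of the $m$-determined range in the sufficiency part (directly from $y=F(g(y))$ with $g(y)\in X^j$, $j\leqslant m$) is a slight shortcut compared with the paper's appeal to condition (c) and the inclusion $\ran(F_{m+1})\subseteq\bigcup_{k=0}^m\ran(F_k)$, but it is valid.
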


\begin{proof}
(Necessity) Let $F\colon X^*\to Y$ be preassociative and have an $m$-determined range. Then clearly $\ran(F_{m+1})\subseteq\ran(F)=\bigcup_{k=0}^m\ran(F_k)$. Let $g\in Q(F)$ such that $\ran(g)\subseteq\bigcup_{k=0}^mX^k$ and let $H=g\circ F$. By Lemma~\ref{lem:bxd}, $H$ is associative and $m$-bounded, and therefore conditions (a)--(c) hold by Proposition~\ref{prop:NSCfor-mBSA}.

(Sufficiency) Let $F\colon X^*\to Y$ be a function satisfying $\ran(F_{m+1})\subseteq\bigcup_{k=0}^m\ran(F_k)$ and conditions (a)--(c) for some $g\in Q(F)$ such that $\ran(g)\subseteq\bigcup_{k=0}^mX^k$. Since $H=g\circ F$ is $m$-bounded, by condition (c) we must have $\ran(F_{n})\subseteq\ran(F_{m+1})\subseteq\bigcup_{k=0}^m\ran(F_k)$ for every $n\geqslant 1$ and hence $F$ has an $m$-determined range.

Let us show that $F$ is preassociative. By Lemma~\ref{lem:bxd}, it suffices to show that $H=g\circ F$ is associative. By Proposition~\ref{prop:NSCfor-mBSA} it suffices to show that $H\circ H_k=H_k$ or equivalently $g\circ F\circ g\circ F_k=g\circ F_k$ for $k=0,\ldots,m+1$. This identity clearly holds by definition of $g$.
\end{proof}

\begin{theorem}\label{thm:we8ds6fs}
Assume AC and let $m\in\N$. For $k=0,\ldots,m+1$, let $F_k\colon X^k\to Y$ be functions. Then there exists a preassociative function $G\colon X^*\to Y$ having an $m$-determined range and such that $G_k=F_k$ for $k=0,\ldots,m+1$ if and only if $\ran(F_{m+1})\subseteq\bigcup_{k=0}^m\ran(F_k)$ and there exists $g\in Q(F)$, with $\ran(g)\subseteq\bigcup_{k=0}^mX^k$, such that conditions (a) and (b) of Proposition~\ref{prop:34k3k4j5} hold, where $H=g\circ F$. Such a function $G$ is then uniquely determined by $G(\bfy z)=G((g\circ G)(\bfy){\,}z)$ for every $\bfy z\in X^*$.
\end{theorem}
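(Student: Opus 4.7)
My plan is to reduce the statement to Theorem~\ref{thm:NSC-1st} (the associative analogue) by passing to $H = g \circ F$ as in Section~3, and then to recover $G$ via Theorem~\ref{factorizationPA}. The necessity direction is short: given $G$ preassociative with $m$-determined range and satisfying $G_k = F_k$ for $k \leqslant m+1$, one has $\ran(F_{m+1}) \subseteq \ran(G) = \bigcup_{k=0}^m \ran(F_k)$, and the $m$-determined range hypothesis forces $\ran(F) = \ran(G)$. Hence any $g \in Q(G)$ with $\ran(g) \subseteq \bigcup_{k=0}^m X^k$, furnished by Proposition~\ref{prop:34k3k4j5} applied to $G$, also lies in $Q(F)$; and the length-restricted parts of conditions (a) and (b) of that proposition transfer verbatim to $F$ since $F$ and $G$ agree on $\bigcup_{k=0}^{m+1} X^k$.

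For sufficiency, I set $H_k := g \circ F_k$ for $k = 0, \ldots, m+1$; each $H_k$ is $m$-bounded since $\ran(g) \subseteq \bigcup_{k=0}^m X^k$. The technical core is to verify conditions (a)--(c) of Proposition~\ref{prop:NSCfor-mBSA} for this tuple: condition (a), $H \circ H_k = H_k$, follows from the quasi-inverse identity $F \circ \restr{g}{\ran(F)} = \restr{\id}{\ran(F)}$ via the direct calculation $g(F(g(F_k(\bfx)))) = g(F_k(\bfx)) = H_k(\bfx)$; conditions (b) and (c) for $H$ follow by applying $g$ to both sides of the hypothesized conditions (a) and (b) of Proposition~\ref{prop:34k3k4j5} for $F$. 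Theorem~\ref{thm:NSC-1st} then produces a unique associative $m$-bounded extension $\tilde H \colon X^* \to X^*$ of the $H_k$'s, governed by the recursion $\tilde H(\bfy z) = \tilde H(\tilde H(\bfy) z)$.

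To recover $G$, I observe that $\ran(\tilde H) \subseteq \ran(g)$: this holds by construction for parts of arity at most $m+1$, and propagates to all higher arities via the recursion, since the input $\tilde H(\bfy) z$ lies in $\bigcup_{k=0}^{m+1} X^k$ where $\tilde H$ agrees with $g \circ F$. Thus $f := \restr{F}{\ran(\tilde H)}$ is a well-defined function into $Y$, injective because $F \circ \restr{g}{\ran(F)} = \restr{\id}{\ran(F)}$. Setting $G := f \circ \tilde H$, Theorem~\ref{factorizationPA} (in its $m$-bounded refinement) yields that $G$ is preassociative with $m$-determined range, and for $k \leqslant m+1$ we have $G_k(\bfx) = F(g(F_k(\bfx))) = F_k(\bfx)$. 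The recursion $G(\bfy z) = G((g \circ G)(\bfy) z)$ follows from the corresponding identity for $\tilde H$ by noting that $(g \circ G)(\bfy) = \tilde H(\bfy)$ (since $f \circ g = \id$ on the relevant set), and uniqueness of $G$ is then a strong induction on $|\bfy z|$, available because $|(g \circ G)(\bfy) z| \leqslant m+1$ always holds. The step I expect to require the most bookkeeping is the translation of the length-restricted hypotheses on $F$ into the full hypotheses of Proposition~\ref{prop:NSCfor-mBSA} for $H$: this works because the restriction $|x \bfy z| \leqslant m+2$ in condition (b) of Proposition~\ref{prop:34k3k4j5} is precisely what keeps both arguments of $F$ inside $\bigcup_{k=0}^{m+1} X^k$ so that $g$ can be applied on either side.
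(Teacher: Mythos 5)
Your proof is correct and follows essentially the route the paper intends: the paper states Theorem~\ref{thm:we8ds6fs} without an explicit proof, asserting that it ``follows from'' Proposition~\ref{prop:34k3k4j5}, and your argument---transferring conditions (a) and (b) to the $m$-bounded parts $H_k=g\circ F_k$, invoking Theorem~\ref{thm:NSC-1st} to build the associative extension $\tilde H$, and recovering $G=\restr{F}{\ran(\tilde H)}\circ\tilde H$ via Theorem~\ref{factorizationPA}---is precisely the intended derivation, with the details (well-definedness of $g$ on the partial $F$, $\ran(\tilde H)\subseteq\ran(g)$, and the uniqueness induction) correctly filled in.
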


We end this section by giving equivalent conditions for a function $F\colon X^*\to Y$ to have an $m$-determined range. This result generalizes Proposition~2.4 in \cite{MarTeh}.

\begin{proposition}\label{prop:rw-we76r}
Assume AC, let $F\colon X^*\to Y$ be a function, and let $m\in\N$. The following assertions are equivalent.
\begin{enumerate}
\item[(i)] $F$ has an $m$-determined range.

\item[(ii)] There exists an $m$-bounded function $H\colon X^*\to X^*$ such that $F=F\circ H$.

\item[(iii)] There exists an $m$-bounded function $H\colon X^*\to X^*$, with $H_k=\restr{\id}{X^k}$ for $k=0,\ldots,m$, and a function $f\colon X^*\to Y$ such that $F=f\circ H$. In this case, $f_k=F_k$ for $k=0,\ldots,m$.

\item[(iv)] There exist functions $H\colon X^*\to X^*$ and $f\colon X^*\to Y$ such that $F=f\circ H$ and there exists a partition $\{A_0,\ldots,A_m\}$ of $X^*$ such that $\ran(\restr{H}{A_k})\subseteq X^k$ and $\restr{H}{A_k}=H_k\circ\restr{H}{A_k}$ for $k=0,\ldots,m$. In this case, $\restr{F}{A_k}=F_k\circ\restr{H}{A_k}$ for $k=0,\ldots,m$.

\item[(v)] There exists a function $H\colon X^*\to X^*$ having an $m$-determined range and a function $f\colon X^*\to Y$ such that $F=f\circ H$.
\end{enumerate}
\end{proposition}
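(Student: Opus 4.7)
The plan is to close the chain (i) $\Rightarrow$ (iii) $\Rightarrow$ (iv) $\Rightarrow$ (v) $\Rightarrow$ (i) and then slot (ii) in via the trivial (iii) $\Rightarrow$ (ii) together with the already available implication (ii) $\Rightarrow$ (i) from Proposition~\ref{prop:Hmb-Fmr51}. The side identities asserted inside (iii) and (iv) ($f_k=F_k$ and $\restr{F}{A_k}=F_k\circ\restr{H}{A_k}$) will fall out of the constructions and I would check them inline at the appropriate step.

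The easy leg is (v) $\Rightarrow$ (i): writing $F_k=f\circ H_k$ gives
\[
\ran(F)~=~f(\ran(H))~=~f\Bigl(\bigcup_{k=0}^{m}\ran(H_k)\Bigr)~=~\bigcup_{k=0}^{m}\ran(F_k).
\]
The substantive step is (i) $\Rightarrow$ (iii). Here I would apply AC exactly as in Lemma~\ref{lem:gfr} to pick a quasi-inverse $g\in Q(F)$ with $\ran(g)\subseteq\bigcup_{k=0}^{m}X^{k}$; the $m$-determined range of $F$ is precisely what makes this choice possible. Then I would define $H\colon X^*\to X^*$ piecewise by $H(\bfx)=\bfx$ when $|\bfx|\leqslant m$ and $H(\bfx)=g(F(\bfx))$ otherwise, and set $f=F$. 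By construction $H_k=\restr{\id}{X^k}$ for $k\leqslant m$, $H$ is $m$-bounded, and the identity $F=F\circ H$ reduces to $F=F\circ g\circ F$ on long strings (which holds by $g\in Q(F)$) and is tautological on short ones. The claim $f_k=F_k$ is automatic because $f=F$.

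For (iii) $\Rightarrow$ (iv) I would set $A_k=H^{-1}(X^k)$ for $k=0,\ldots,m$; since $H$ is $m$-bounded these sets partition $X^*$, and $\ran(\restr{H}{A_k})\subseteq X^k$ by definition. The equation $\restr{H}{A_k}=H_k\circ\restr{H}{A_k}$ then reduces to $H(\bfx)=H_k(H(\bfx))=H(\bfx)$ for $\bfx\in A_k$, using $H(\bfx)\in X^k$ and $H_k=\restr{\id}{X^k}$. The side identity $\restr{F}{A_k}=F_k\circ\restr{H}{A_k}$ follows by composing $F=f\circ H$ and invoking $F_k=f\circ H_k$. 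For (iv) $\Rightarrow$ (v) the partition condition forces $H(H(\bfx))=H(\bfx)$ for every $\bfx\in X^*$ (pick the $k$ with $\bfx\in A_k$ and apply the stated equality), hence $\ran(H)\subseteq\bigcup_{k=0}^{m}\ran(H_k)$ and $H$ has an $m$-determined range.

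Finally (iii) $\Rightarrow$ (ii) comes from the same observation: since $H$ is $m$-bounded and equals the identity on $\bigcup_{k=0}^{m}X^{k}$, we have $H\circ H=H$ on all of $X^*$, so $F\circ H=f\circ H\circ H=f\circ H=F$ with $H$ still $m$-bounded. I foresee no real obstacle here; the only mildly delicate point is the selection of the quasi-inverse in (i) $\Rightarrow$ (iii) so that its range lies in $\bigcup_{k=0}^{m}X^{k}$, and this is exactly the selection argument already used in the proof of Lemma~\ref{lem:gfr}.
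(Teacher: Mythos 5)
Your proof is correct and follows essentially the same route as the paper: the quasi-inverse $g$ with $\ran(g)\subseteq\bigcup_{k=0}^{m}X^{k}$ from Lemma~\ref{lem:gfr}, the modification of $H$ to the identity on short strings, the partition $A_k=H^{-1}(X^k)$, and the preservation of $m$-determined range under left composition are exactly the paper's ingredients. The only difference is cosmetic: the paper runs the single cycle (i)$\Rightarrow$(ii)$\Rightarrow$(iii)$\Rightarrow$(iv)$\Rightarrow$(v)$\Rightarrow$(i), whereas you close a four-step cycle omitting (ii) and attach (ii) separately via (iii)$\Rightarrow$(ii) and Proposition~\ref{prop:Hmb-Fmr51}.
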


\begin{proof}
(i) $\implies$ (ii) Follows from Lemma~\ref{lem:gfr}.

(ii) $\implies$ (iii) Modifying $H_k$ into $\restr{\id}{X^k}$ for $k=0,\ldots,m$ and taking $f=F$, we obtain $F=f\circ H$. We then have $F_k=f\circ H_k=f_k$ for $k=0,\ldots,m$.

(iii) $\implies$ (iv) The first part is trivial. We can take, e.g., $A_k=H^{-1}(X^k)$. Also, we have
$\restr{F_k\circ H}{A_k}=\restr{f\circ H_k\circ H}{A_k}=\restr{f\circ H}{A_k}=\restr{F}{A_k}$ for $k=0,\ldots,m$.

(iv) $\implies$ (v) If $y\in\ran(H)$, then there exists $k\in\{0,\ldots,m\}$ such that $y\in\ran(\restr{H}{A_k})\subseteq\ran(H_k)$. Hence $H$ has an $m$-determined range.

(v) $\implies$ (i)  Follows from the fact that the property of having an $m$-determined range is preserved under left composition with unary maps.
\end{proof}

\begin{corollary}
Let $m\in\N$ and let $F\colon X^*\to Y$ have an $m$-determined range. If $F_0,\ldots,F_m$ are injective, then there exists a unique $m$-bounded function $H\colon X^*\to X^*$ such that $F=F\circ H$.
\end{corollary}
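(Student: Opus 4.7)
My plan is to split the statement into existence and uniqueness. Existence comes for free from Proposition~\ref{prop:rw-we76r}: the implication (i) $\Rightarrow$ (ii) produces an $m$-bounded $H\colon X^*\to X^*$ with $F = F\circ H$ from the $m$-determined range hypothesis alone, no injectivity required.

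For uniqueness, I would let $H$ and $H'$ be two $m$-bounded maps with $F\circ H = F\circ H' = F$, fix an arbitrary $\bfx\in X^*$, and set $k = |H(\bfx)|$ and $j = |H'(\bfx)|$; by $m$-boundedness, $k, j\in\{0,\ldots,m\}$. The equality $F\circ H = F\circ H'$ then gives $F_k(H(\bfx)) = F(\bfx) = F_j(H'(\bfx))$, placing $F(\bfx)$ in $\ran(F_k)\cap\ran(F_j)$. Reading ``$F_0,\ldots,F_m$ are injective'' in the joint sense --- the combined map $\bigcup_{k=0}^m F_k\colon \bigcup_{k=0}^m X^k\to Y$ is injective, equivalently, each $F_k$ is injective \emph{and} the ranges $\ran(F_0),\ldots,\ran(F_m)$ are pairwise disjoint --- one obtains $k = j$ from disjointness, and then $H(\bfx) = H'(\bfx)$ from injectivity of $F_k$. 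Since $\bfx$ was arbitrary, $H = H'$.

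The only delicate point in this plan is the reading of the injectivity hypothesis. With mere individual injectivity of each $F_k$ and possibly overlapping ranges, the uniqueness statement fails: any coincidence $F_i(\bfy) = F_k(\bfy')$ with $i<k\leqslant m$ would let one reroute $H$ between $\bfy$ and $\bfy'$ at inputs mapped to that common value, giving two distinct admissible $H$. So the argument genuinely uses the joint-injectivity reading, which I take to be what is intended here.
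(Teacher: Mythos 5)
Your proposal is correct and follows essentially the same route as the paper: existence is read off from Proposition~\ref{prop:rw-we76r}, and uniqueness comes from inverting the injective parts $F_k$. The paper's own uniqueness argument invokes part (iv) of that proposition: there is a partition $\{A_0,\ldots,A_m\}$ of $X^*$ with $\restr{F}{A_k}=F_k\circ\restr{H}{A_k}$, whence $\restr{H}{A_k}=F_k^{-1}\circ\restr{F}{A_k}$, ``hence $H$ is uniquely determined.'' Your pointwise argument is sharper, and the delicate point you isolate is genuine: the partition used in the paper's proof is $A_k=H^{-1}(X^k)$, which depends on $H$ itself, so the displayed identity pins down $H$ only once the partition is pinned down --- and that requires exactly the pairwise disjointness of $\ran(F_0),\ldots,\ran(F_m)$ that your joint-injectivity reading supplies. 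Under the literal reading (each $F_k$ separately injective, ranges possibly overlapping) the uniqueness claim fails: take $X=\{a,b\}$, $m=1$, $Y=\{0,1\}$, and $F(\bfx)=0$ if $\bfx\in a^*$ and $F(\bfx)=1$ otherwise; then $F_0$ and $F_1$ are injective and $F$ has a $1$-determined range, yet an admissible $H$ may send any $\bfx$ with $F(\bfx)=0$ to either $\varepsilon$ or $a$. So the joint reading is the one under which the corollary (and the paper's own proof sketch) is valid, and your argument is complete under that reading.
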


\begin{proof}
By Proposition~\ref{prop:rw-we76r} there exists an $m$-bounded function $H\colon X^*\to X^*$ such that $F=F\circ H$. Also, there exists a partition $\{A_0,\ldots,A_m\}$ of $X^*$ such that $\restr{F}{A_k}=F_k\circ\restr{H}{A_k}$, or equivalently, $\restr{H}{A_k}=F_k^{-1}\circ\restr{F}{A_k}$ for $k=0,\ldots,m$. Hence $H$ is uniquely determined.
\end{proof}

\section{Functions depending only on the length of the input}

We now consider the special class of string functions that depend only on the length of the input. Our aim is to characterize associativity and preassociativity within this class.

\begin{definition}
We say that a function $F \colon X^* \to X^*$ is \emph{weakly length-based} if for every $\bfx,\bfy\in X^*$ we have $\length{F(\bfx)}=\length{F(\bfy)}$ whenever $\length{\bfx}=\length{\bfy}$. We say that a function $F\colon X^*\to Y$ is \emph{length-based} if for every $\bfx,\bfy\in X^*$ we have $F(\bfx)=F(\bfy)$ whenever $\length{\bfx}=\length{\bfy}$.
\end{definition}

Note that $F$ is length-based if and only if there exists a map $\phi \colon \N \to X^*$ such that $F = \phi \circ \length{\cdot}$, i.e., $F(\bfx) = \phi(\length{\bfx})$ for all $\bfx \in X^*$.

\begin{example}
Any standard function $F\colon X^* \to X\cup\{\varepsilon\}$ such that $F(\varepsilon)=\varepsilon$ is weakly length-based.
It is easy to see that if $\phi$ satisfies $\length{\phi(n)} = n$ for all $n \in \N$, then the function $F \colon X^* \to X^*$ given by $F = \phi \circ \length{\cdot}$ is associative and standard.
For another example, let $\phi \colon \N \to X^*$ be any map satisfying $\length{\phi(0)} = 0$, $\length{\phi(1)} = 1$, $\length{\phi(2k)} = 4$, $\length{\phi(2k + 1)} = 5$, for all integers $k \geqslant 1$. Then $F \colon X^* \to X^*$ given by $F = \phi \circ \length{\cdot}$ is associative and standard. Finally, for every integer $k\geqslant 1$, the function $F\colon X^*\to\N$ defined by $F=\length{\cdot}\mod k$ is preassociative.
\end{example}

\begin{proposition}\label{prop:lengthassoc}
Let $F \colon X^* \to X^*$ be a function.
\begin{enumerate}
\item[(a)] If $F$ is associative and weakly length-based, then there is a map $\alpha\colon\N \to \N$ such that $\ran(F_k)\subseteq X^{\alpha(k)}$ for all $k\in \N$ and
\begin{equation}
\label{eq:g(n+k)}
\alpha(n + k) ~=~ \alpha(\alpha(n) + k), \qquad \text{for all $n, k \in \N$}.
\end{equation}
In this case, $F$ is standard if and only if $\alpha$ satisfies
\begin{equation}\label{eq:g(0)=0}
 \alpha(n)=0\quad \iff\quad n=0, \qquad \text{for all } n \in\N.
\end{equation}

\item[(b)] $F$ is associative and length-based if and only if  $F = \psi \circ \alpha \circ \length{\cdot}$ for some $\psi \colon \N \to X^*$ satisfying $\length{\psi(n)} = n$ for all $n \in \N$ and some $\alpha\colon\N \to \N$ satisfying (\ref{eq:g(n+k)}).
\end{enumerate}
\end{proposition}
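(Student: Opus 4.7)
The plan for part (a) is to set $\alpha(k) := |F(\bfx)|$ for any $\bfx\in X^k$, which is well-defined because $F$ is weakly length-based and makes $\ran(F_k)\subseteq X^{\alpha(k)}$ immediate. Applying $|\cdot|$ to both sides of the associativity equation and invoking weak length-basedness then yields
$$
\alpha(|\bfx|+|\bfy|+|\bfz|) \,=\, \alpha(|\bfx|+\alpha(|\bfy|)+|\bfz|),
$$
and setting $n=|\bfy|$ and $k=|\bfx|+|\bfz|$ gives (\ref{eq:g(n+k)}), since $X$ is nonempty so all triples of lengths are realizable. For the standardness claim, Fact~\ref{fact:45u}(b) provides $F(\varepsilon)=\varepsilon$ and hence $\alpha(0)=0$, while $\alpha(n)=0$ for some $n\geqslant 1$ would give $F(\bfx)=\varepsilon=F(\varepsilon)$ for any $\bfx\in X^n$, contradicting standardness. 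The converse is equally direct: if (\ref{eq:g(0)=0}) holds, then $\alpha(0)=0$ forces $F(\varepsilon)=\varepsilon$, and $F(\bfx)=F(\varepsilon)=\varepsilon$ forces $\alpha(|\bfx|)=0$, hence $\bfx=\varepsilon$.

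For the backward direction of part (b), my plan is a direct verification: $F=\psi\circ\alpha\circ|\cdot|$ is length-based by inspection, and both sides of the associativity equation reduce to $\psi(\alpha(|\bfx|+|\bfy|+|\bfz|))$ once we use $|\psi(m)|=m$ and (\ref{eq:g(n+k)}). For the forward direction, I write $F=\phi\circ|\cdot|$ and invoke part~(a) to obtain $\alpha$ with $|\phi(n)|=\alpha(n)$ satisfying (\ref{eq:g(n+k)}). The pivotal observation is that idempotency of $F$ (Fact~\ref{fact:45u}(a)) translates into the identity $\phi(n)=\phi(\alpha(n))$, which allows me to define $\psi(r):=\phi(n)$ for any $n$ with $\alpha(n)=r$; this is unambiguous because $\alpha(n)=\alpha(m)$ implies $\phi(n)=\phi(\alpha(n))=\phi(\alpha(m))=\phi(m)$. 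For $r\notin\ran(\alpha)$ I pick $\psi(r)$ arbitrarily in $X^r$ (nonempty since $X\neq\varnothing$ and $r\in\N$), so $|\psi(r)|=r$ holds in every case and $\phi=\psi\circ\alpha$ is the desired factorization.

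The only delicate point is the well-definedness of $\psi$ in part~(b); everything else is either a direct transcription of associativity via the length function or a routine check. The leverage for that well-definedness comes precisely from the idempotency identity $\phi(n)=\phi(\alpha(n))$, without which two distinct values of $n$ with the same $\alpha$-image could carry different $\phi$-values and block the factorization through $\alpha$.
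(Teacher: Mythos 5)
Your proof is correct and follows essentially the same route as the paper's: $\alpha$ is extracted from weak length\-/basedness, equation \eqref{eq:g(n+k)} comes from applying $\length{\cdot}$ to the associativity identity, and the factorization $\phi=\psi\circ\alpha$ in part~(b) rests on the idempotency identity $\phi(n)=\phi(\alpha(n))$, which is exactly the paper's observation that $F(\bfx)=F(F(\bfx))=\bfy_{\length{F(\bfx)}}$. Your treatment is merely more explicit about the well\-/definedness of $\psi$ on $\ran(\alpha)$ and its arbitrary length\-/preserving extension off $\ran(\alpha)$, details the paper leaves implicit.
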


\begin{proof}
(a) Since $F$ is weakly length-based, there is a function $\alpha\colon \N \to \N$ such that $\ran(F_k)\subseteq X^{\alpha(k)}$ for all $k\in \N$. By associativity, we have $F(\bfx \bfy) = F(F(\bfx) \bfy)$ for all $\bfx, \bfy \in X^*$ with $\length{\bfx} = n$ and $\length{\bfy} = k$. Since $\length{F(\bfx \bfy)}=\alpha(n+k)$ and $\length{ F(F(\bfx) \bfy)}=\alpha(\alpha(n)+k)$,  it follows that $\alpha(n + k) = \alpha(\alpha(n) + k)$ for all $n, k \in \N$.

The last part of the statement follows from the fact that $\varepsilon$ is the only zero-length string.

(b) (Necessity)
By (a) and since $F$ is length-based, there is a map $\alpha\colon\N \to \N$ satisfying (\ref{eq:g(n+k)}) and  some $\bfy_k\in X^{\alpha(k)}$ for every $k\in \N$ such that $F(\bfx)=\bfy_k$ for all $k \in \N$ and all $\bfx \in X^k$.

Together with associativity, this implies that if $\length{F(\bfx)} = \length{F(\bfy)}$, then
\[
F(\bfx)
~=~ F(F(\bfx))
~=~ \bfy_{\length{F(\bfx)}}
~=~ \bfy_{\length{F(\bfy)}}
~=~ F(F(\bfy))
~=~ F(\bfy).
\]
Therefore, we can decompose $F$ as $F = \psi \circ \alpha\circ \length{\cdot}$ for some $\psi \colon \N \to X^*$ such that $\length{\psi(n)} = n$ for all $n \in \N$.

(Sufficiency)
The function $F = \psi \circ \alpha \circ \length{\cdot}$ is clearly length-based.
In order to verify associativity, let $\bfx, \bfy, \bfz \in X^*$ with $\length{\bfx} = a$, $\length{\bfy} = b$, $\length{\bfz} = c$.
By condition \eqref{eq:g(n+k)} we have
\[
F(\bfx F(\bfy) \bfz)
~=~ f(\alpha(a + \alpha(b) + c))
~=~ f(\alpha(a + b + c))
~=~ F(\bfx \bfy \bfz).
\]
This completes the proof.
\end{proof}

\begin{remark}
It is not difficult to see that the converse statement of Proposition~\ref{prop:lengthassoc}(a) does not hold. As a counterexample, take $F\colon\{0,1\}^*\to\{0,1\}^*$ defined as $F(\varepsilon)=\varepsilon$ and $F(x_1\cdots x_n)=\overline{x}_1\cdots\overline{x}_n$ for every integer $n\geqslant 1$, where $\overline{x}=1-x$. We have $F\circ F\neq F$ and hence $F$ is not associative. However, we have $\ran(F_k)\subseteq X^{\alpha(k)}$ for all $k\in \N$, where $\alpha=\id$ satisfies Eq.~\eqref{eq:g(n+k)}.
\end{remark}

By Proposition~\ref{prop:lengthassoc}, the problem of characterizing the length-based associative  functions reduces to the problem of characterizing the functions $\alpha \colon \N \to \N$ satisfying \eqref{eq:g(n+k)}.
In what follows, we find an explicit description of such functions $\alpha$ (see Proposition~\ref{prop:g}).
We first need to establish a few auxiliary results.
We begin by reformulating condition \eqref{eq:g(n+k)} in order to simplify the analysis.

\begin{lemma}
\label{lem:conditions}
Condition \eqref{eq:g(n+k)} is equivalent to
\begin{align}
\label{eq:gg=g}
& \alpha(\alpha(n)) = \alpha(n)
&&
\text{and} \\
\label{eq:nn'k}
& \alpha(n) = \alpha(n') \implies \alpha(n + k) = \alpha(n' + k),
&&\text{for all $n, n', k \in \N$.}
\end{align}
\end{lemma}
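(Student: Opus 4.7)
The equivalence is essentially bookkeeping, so the plan is short and symmetric: prove each direction by a one-line substitution.

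For the forward direction, I would start by assuming \eqref{eq:g(n+k)}. Setting $k = 0$ instantly yields $\alpha(n) = \alpha(\alpha(n))$, which is \eqref{eq:gg=g}. To derive \eqref{eq:nn'k}, suppose $\alpha(n) = \alpha(n')$ for some $n, n' \in \N$. Then for every $k \in \N$ we have $\alpha(n) + k = \alpha(n') + k$, so $\alpha(\alpha(n) + k) = \alpha(\alpha(n') + k)$, and applying \eqref{eq:g(n+k)} to each side gives $\alpha(n + k) = \alpha(n' + k)$.

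For the converse, assume \eqref{eq:gg=g} and \eqref{eq:nn'k}. Fix $n, k \in \N$. By \eqref{eq:gg=g} we have $\alpha(n) = \alpha(\alpha(n))$, so taking $n' = \alpha(n)$ in \eqref{eq:nn'k} yields $\alpha(n + k) = \alpha(\alpha(n) + k)$, which is \eqref{eq:g(n+k)}.

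Both directions are entirely mechanical and there is no real obstacle; the lemma is just isolating the two independent content-bearing consequences of \eqref{eq:g(n+k)} (idempotence of $\alpha$ and the fact that fibers of $\alpha$ are preserved under translation) so that the structural analysis of $\alpha$ in the following Proposition~\ref{prop:g} can treat them separately. The proof therefore need not be longer than the two displayed implications above.
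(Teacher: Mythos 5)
Your proof is correct and follows essentially the same route as the paper's: $k=0$ gives \eqref{eq:gg=g}, the chain $\alpha(n+k)=\alpha(\alpha(n)+k)=\alpha(\alpha(n')+k)=\alpha(n'+k)$ gives \eqref{eq:nn'k}, and the converse takes $n'=\alpha(n)$ in \eqref{eq:nn'k} using \eqref{eq:gg=g}. No issues.
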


\begin{proof}
Condition \eqref{eq:gg=g} is a special case of \eqref{eq:g(n+k)} with $k = 0$.
Under the assumption that $\alpha(n) = \alpha(n')$, it follows from condition~\eqref{eq:g(n+k)} that
\[
\alpha(n + k) ~=~ \alpha(\alpha(n) + k) ~=~ \alpha(\alpha(n') + k) ~=~ \alpha(n' + k).
\]
Condition \eqref{eq:g(n+k)} follows from \eqref{eq:gg=g} and \eqref{eq:nn'k} by taking $n' = \alpha(n)$.
\end{proof}

A function $\alpha \colon \N \to \N$ is  \emph{$(n_1,p)$-periodic} if for all $n \geqslant n_1$ it holds that $\alpha(n) = \alpha(n + p)$.
It is clear that if $\alpha$ is $(n_1,p)$-periodic, then it is $(n'_1,p')$-periodic for every $n'_1 \geqslant n_1$ and for every multiple $p'$ of $p$.

The following lemma is folklore. We provide a proof for the sake of self-contained{\-}ness.

\begin{lemma}
\label{lem:ultper}
If the function $\alpha \colon \N \to \N$ is $(n_1,p_1)$-periodic and $(n_2,p_2)$-periodic, then $\alpha$ is $(\min(n_1, n_2),\gcd(p_1, p_2))$-periodic.
\end{lemma}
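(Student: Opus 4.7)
The plan is to reduce to the overlap region where both periodicities are directly available, and then propagate the conclusion downward using the smaller shift. Without loss of generality assume $n_1 \leqslant n_2$, so that $\min(n_1, n_2) = n_1$, and set $d = \gcd(p_1, p_2)$. The task is to show $\alpha(n) = \alpha(n + d)$ for every $n \geqslant n_1$.

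First I would handle the range $n \geqslant n_2$, where both periodicity hypotheses apply. By B\'ezout's identity one can choose positive integers $a, b$ with $d = a p_1 - b p_2$: starting from any integer relation $d = a_0 p_1 + b_0 p_2$, one replaces $(a_0, b_0)$ by $(a_0 + t p_2,\, b_0 - t p_1)$ for a sufficiently large $t$ to make the first coefficient positive and the second negative. For $n \geqslant n_2$, applying the $(n_2, p_2)$-periodicity $b$ times gives
\[
\alpha(n + d) ~=~ \alpha(n + d + b p_2) ~=~ \alpha(n + a p_1),
\]
and applying the $(n_1, p_1)$-periodicity $a$ times (legitimate since $n \geqslant n_2 \geqslant n_1$) gives $\alpha(n + a p_1) = \alpha(n)$. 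Hence $\alpha(n) = \alpha(n + d)$ throughout $[n_2, \infty) \cap \N$.

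Next I would extend this to the full range $n \geqslant n_1$. Given such $n$, choose an integer $k \geqslant 0$ with $n + k p_1 \geqslant n_2$. Then $(n_1, p_1)$-periodicity applied $k$ times yields both $\alpha(n) = \alpha(n + k p_1)$ and $\alpha(n + d) = \alpha(n + d + k p_1)$, while the first step applied at the argument $n + k p_1 \geqslant n_2$ gives $\alpha(n + k p_1) = \alpha(n + k p_1 + d)$. Chaining these three equalities completes the proof.

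The only genuine subtlety is arranging the B\'ezout representation with coefficients of opposite signs, so that each invocation of a periodicity is applied to arguments in its admissible range; beyond this sign bookkeeping, the argument is entirely mechanical.
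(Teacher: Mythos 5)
Your proof is correct and follows essentially the same route as the paper's: B\'ezout's identity plus careful tracking of which periodicity may be applied at which arguments. The only difference is organizational --- you first establish the claim on the overlap region $n \geqslant n_2$ using a representation $d = a p_1 - b p_2$ with $a, b > 0$ and then propagate it down to $[n_1, n_2)$ by shifting with multiples of $p_1$, whereas the paper handles all $n \geqslant n_1$ in a single chain at the cost of a case split on the signs of the B\'ezout coefficients; both versions are sound.
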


\begin{proof}
Assume, without loss of generality, that $n_1 \leqslant n_2$.
Let $d = \gcd(p_1, p_2)$.
We need to show that $\alpha(n) =  \alpha(n + d)$ whenever $n \geqslant n_1$.

By B\'ezout's lemma, there exist integers $c_1$ and $c_2$ such that $d = c_1 p_1 + c_2 p_2$. Note that one of $c_1$ and $c_2$ is nonnegative and the other is nonpositive.
Consider first the case that $c_1 \leqslant 0$ and $c_2 \geqslant 0$.
Let $k \in \N$ be a large enough integer such that $(k p_2 + c_1) p_1 \geqslant n_2 - n_1$.
Then, for $n \geqslant n_1$, we have
\begin{align*}
\alpha(n + d)
&~=~ \alpha(n + c_1 p_1 + c_2 p_2 + k p_1 p_2) \\
&~=~ \alpha(n + (k p_2 + c_1) p_1 + c_2 p_2)
~=~ \alpha(n + (k p_2 + c_1) p_1)
~=~ \alpha(n),
\end{align*}
where the first equality holds by B\'ezout's identity and because $\alpha$ is $(n_1,p_1)$-periodic;
the second equality is the result of a simple algebraic rearrangement;
the third equality holds because $n + (k p_2 + c_1) p_1 \geqslant n_2$ and $\alpha$ is $(n_2,p_2)$-periodic;
and the last equality holds because $\alpha$ is $(n_1,p_1)$-periodic.

In the case when $c_1 \geqslant 0$ and $c_2 \leqslant 0$ we choose $k$ in such a way that $(k p_1 + c_2) p_2 \geqslant n_2 - n_1$. Then a similar argument shows that if $n \geqslant n_1$ then $\alpha(n + d) = \alpha(n)$ holds also in this case.
\end{proof}

\begin{lemma}
\label{lem:ultperdiff}
Assume that $\alpha \colon \N \to \N$ satisfies conditions \eqref{eq:gg=g} and \eqref{eq:nn'k}.
If $\alpha(n) \neq n$, then $\alpha$ is $(n_0, n'_0 - n_0)$-periodic, where $n_0 = \min \{n, \alpha(n)\}$ and $n'_0 = \max \{n, \alpha(n)\}$.
\end{lemma}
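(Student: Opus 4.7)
The plan is to exploit the two given conditions directly, without any case analysis. Set $m = \alpha(n)$. Then condition \eqref{eq:gg=g} gives $\alpha(m) = \alpha(\alpha(n)) = \alpha(n)$, so $n$ and $m$ have the same image under $\alpha$. Feeding this equality into condition \eqref{eq:nn'k} with $n' = m$ yields
\[
\alpha(n + k) ~=~ \alpha(m + k), \qquad k \in \N.
\]

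Now set $p = n'_0 - n_0 = |n - \alpha(n)|$, which is positive since $\alpha(n) \neq n$. Regardless of whether $n_0 = n$ or $n_0 = m$, the pair $\{n_0, n'_0\}$ equals $\{n, m\}$, and for every $k \geqslant 0$ the displayed identity reads $\alpha(n_0 + k) = \alpha(n'_0 + k) = \alpha(n_0 + k + p)$. Writing an arbitrary integer $n'' \geqslant n_0$ as $n'' = n_0 + k$, we conclude $\alpha(n'') = \alpha(n'' + p)$, which is precisely the statement that $\alpha$ is $(n_0, p)$-periodic.

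There is no real obstacle here; the only thing to get right is that condition \eqref{eq:nn'k} is applied after using \eqref{eq:gg=g} to manufacture a pair $(n, m)$ with $\alpha(n) = \alpha(m)$ and $|n - m| = n'_0 - n_0$. Once that observation is in place, a single invocation of \eqref{eq:nn'k} delivers the periodicity on the whole tail $[n_0, \infty)$.
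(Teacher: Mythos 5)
Your proof is correct and follows essentially the same route as the paper: condition \eqref{eq:gg=g} gives $\alpha(n_0)=\alpha(n'_0)$, and a single application of \eqref{eq:nn'k} then yields $\alpha(n_0+k)=\alpha(n'_0+k)=\alpha(n_0+k+(n'_0-n_0))$ for all $k\in\N$. No differences worth noting.
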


\begin{proof}
By \eqref{eq:gg=g}, we have $\alpha(\alpha(n)) = \alpha(n)$; hence $\alpha(n_0) = \alpha(n'_0)$.
It follows from \eqref{eq:nn'k} that for all $k \in \N$,
\[
\alpha(n_0 + k + (n'_0 - n_0)) ~=~ \alpha(n'_0 + k) ~=~ \alpha(n_0 + k).
\qedhere
\]
\end{proof}

We are now in position to describe how the length of the output depends on the length of the input in a length-based associative function.
\begin{proposition}
\label{prop:g}
Let $\alpha \colon \N \to \N$. The following conditions are equivalent.
\begin{enumerate}
\item[(i)] $\alpha$ satisfies conditions \eqref{eq:gg=g} and \eqref{eq:nn'k}.
\item[(ii)] Either $\alpha$ is the identity function on $\N$ or there exist integers $n_1\geqslant 0$ and $\ell> 0$ such that
   \begin{enumerate}
   \item[(a)] $\alpha(n) = n$ whenever $0 \leqslant n < n_1$,
   \item[(b)] $\alpha$ is $(n_1,\ell)$-periodic,
  \item[(c)] $\alpha(n) \geqslant n$ and $\alpha(n) \equiv n \pmod{\ell}$ whenever $n_1 \leqslant n < n_1 + \ell$.
   \end{enumerate}
\end{enumerate}
In addition, $\alpha$ satisfies condition \eqref{eq:g(0)=0} if and only if $\alpha$ is the identity function on $\N$ or $\alpha$ satisfies conditions (a)--(c) with $n_1>0$.

\end{proposition}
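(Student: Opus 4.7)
My plan is to treat the two implications separately and handle the addendum about \eqref{eq:g(0)=0} at the end. By Lemma~\ref{lem:conditions}, I can work with the reformulated pair \eqref{eq:gg=g} and \eqref{eq:nn'k} in place of (i).

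For (ii) $\implies$ (i), the identity case is immediate. Otherwise, I first propagate (b) and (c) to all $n \geqslant n_1$: writing $r = (n - n_1) \bmod \ell$, $\ell$-periodicity gives $\alpha(n) = \alpha(n_1 + r)$, and (c) then yields $\alpha(n) \geqslant n_1 + r \geqslant n_1$ together with $\alpha(n) \equiv n \pmod{\ell}$. With this, \eqref{eq:gg=g} follows since $\alpha(n)$ lies above $n_1$ with the same residue as $n$ modulo $\ell$, so $\alpha(\alpha(n)) = \alpha(n_1 + r) = \alpha(n)$. For \eqref{eq:nn'k}, I split on whether $n, n'$ lie below or at/above $n_1$: below, both are fixed by (a), so $n = n'$; straddling is impossible since the two images lie strictly on opposite sides of $n_1$; at or above, equal images force $n \equiv n' \pmod{\ell}$, which is preserved under translation by $k$.

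For (i) $\implies$ (ii), suppose $\alpha \neq \id$. Then some $n$ satisfies $\alpha(n) \neq n$, and Lemma~\ref{lem:ultperdiff} yields an ultimate period; using Lemma~\ref{lem:ultper} to take gcds, I define $\ell$ as the least positive ultimate period and $n_1$ as the least onset for which $\alpha$ is $(n_1, \ell)$-periodic, so that (b) holds by construction. For (a), suppose $\alpha(n) \neq n$ with $n < n_1$; Lemma~\ref{lem:ultperdiff} then gives periodicity from onset $\min(n, \alpha(n)) \leqslant n$ with some positive period $p$, and by Lemma~\ref{lem:ultper} together with minimality of $\ell$ we obtain $\ell \mid p$, yielding $(n, \ell)$-periodicity from an onset below $n_1$, contradicting minimality of $n_1$. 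For (c), take $n_1 \leqslant n < n_1 + \ell$; if $\alpha(n) < n$, the same scheme forces $\ell \mid (n - \alpha(n))$, and then $\alpha(n) < n_1$ contradicts minimality of $n_1$ while $n_1 \leqslant \alpha(n) < n$ forces $0 < n - \alpha(n) < \ell$, contradicting divisibility. Hence $\alpha(n) \geqslant n$, and when $\alpha(n) > n$ the divisibility $\ell \mid (\alpha(n) - n)$ is obtained in the same way, giving $\alpha(n) \equiv n \pmod{\ell}$. The main obstacle here is the bookkeeping: each appeal to Lemma~\ref{lem:ultperdiff} must be combined via Lemma~\ref{lem:ultper} with the $(n_1, \ell)$-periodicity, and the resulting divisibilities have to be reconciled with the minimality of $\ell$ (to force $\ell \mid p$) and then of $n_1$.

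For the final statement about \eqref{eq:g(0)=0}, the identity case is trivial. If $n_1 > 0$, then (a) gives $\alpha(0) = 0$, and for $n > 0$ either $n < n_1$ and $\alpha(n) = n > 0$, or $n \geqslant n_1$ and the propagation from the second paragraph yields $\alpha(n) \geqslant n_1 > 0$; in either case $\alpha(n) \neq 0$, so \eqref{eq:g(0)=0} holds. Conversely, if $\alpha \neq \id$ and $n_1 = 0$, then $\alpha$ is $(0, \ell)$-periodic with $\ell > 0$, so $\alpha(0) = \alpha(\ell)$: either $\alpha(0) \neq 0$ already violates \eqref{eq:g(0)=0}, or $\alpha(0) = 0$ and then $\alpha(\ell) = 0$ with $\ell > 0$ violates it.
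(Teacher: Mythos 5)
Your proof is correct and follows essentially the same route as the paper's: the same direct case analysis for (ii) $\Rightarrow$ (i), and for (i) $\Rightarrow$ (ii) the same reliance on Lemmas~\ref{lem:ultper} and \ref{lem:ultperdiff} to extract $n_1$ and $\ell$ and then derive (a)--(c) by contradiction from minimality. The only cosmetic difference is that you take $\ell$ to be the minimal positive ultimate period and $n_1$ the minimal onset for that period (so that the divisibility $\ell \mid p$ does the work in (a) and (c)), whereas the paper takes $\ell = \min\{\lvert n-\alpha(n)\rvert : \alpha(n)\neq n\}$ and $n_1 = \min(D\cup\alpha(D))$ with $D=\{n:\alpha(n)\neq n\}$; the two choices coincide and the verifications run in parallel.
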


\begin{proof}
(i) $\implies$ (ii).
If $\alpha$ is not the identity function, then
the set $D = \{n \in \N : \alpha(n) \neq n\}$ is nonempty.
Let $g(D) = \{\alpha(n) : n \in D\}$, let $n_1$ be the minimum element of $D \cup \alpha(D)$, and let $\ell$ be the minimum of the set $L = \{ \lvert n - \alpha(n) \rvert : n \in D \}$.
In view of Lemmas \ref{lem:ultper} and \ref{lem:ultperdiff}, $\alpha$ is $(n_1,\ell)$-periodic.
Moreover, $\alpha(n) = n$ whenever $n < n_1$ and $n_1>0$ if $\alpha(0)=0$.

Let $n \in \{n_1, \dots, n_1 + \ell - 1\}$.
Suppose, on the contrary, that $\alpha(n) < n$.
If $\alpha(n) < n_1$ then $n_1$ would not be the minimum element of $D \cup \alpha(D)$, a contradiction.
If $n_1 \leqslant \alpha(n) < n$ then $\lvert n - \alpha(n) \rvert < \ell$, which contradicts the minimality of $\ell$ in the set $L$.
We conclude that $\alpha(n) \geqslant n$.

Suppose then, on the contrary, that $\alpha(n) \not\equiv n \pmod{\ell}$.
Then $\alpha(n) - n = q \ell + r$ for some $q \geqslant 0$ and $0 < r < \ell$.
Since $\alpha$ is $(n_1,\ell)$-periodic, we have
\[
\alpha(n + q \ell) ~=~ \alpha(n) ~=~ n + q \ell + r.
\]
By Lemma~\ref{lem:ultperdiff}, this contradicts again the minimality of $\ell$ in the set $L$.
We conclude that $\alpha(n) \equiv n \pmod{\ell}$.

Finally, note that if $\alpha$ satisfies condition \eqref{eq:g(0)=0}, then necessarily $n_1 > 0$.

(ii) $\implies$ (i).
The identity function on $\N$ clearly satisfies conditions \eqref{eq:g(0)=0}, \eqref{eq:gg=g}, and \eqref{eq:nn'k}. If $\alpha$ satisfies conditions (a)--(c) with $n_1 > 0$, then
$\alpha$ also satisfies condition \eqref{eq:g(0)=0}.
Assume that $\alpha$ is not the identity function.

If $0 \leqslant n \leqslant n_1 - 1$, then $\alpha(n) = n$ by (a); hence $\alpha(\alpha(n)) = \alpha(n)$.
If $n \geqslant n_1$, then $n \equiv m \pmod{\ell}$ for some $m \in \{n_1, \dots, n_1 + \ell - 1\}$.
By (b), $\alpha(n) = g(m)$, and by (c), $\alpha(m) \geqslant n_1$ and $\alpha(m) \equiv m \pmod{\ell}$.
Consequently, $\alpha(\alpha(n)) = \alpha(\alpha(m)) = \alpha(m) = \alpha(n)$.
Thus, $\alpha$ satisfies condition \eqref{eq:gg=g}.

Assume then that $\alpha(n) = \alpha(n')$. If $\alpha(n) \leqslant n_1 - 1$, then $n = n'$ and $\alpha(n + k) = \alpha(n' + k)$ holds trivially for all $k \in \N$.
If $\alpha(n) \geqslant n_1$, then both $n$ and $n'$ are greater than or equal to $n_1$ and $n \equiv n' \pmod{\ell}$.
Consequently, for all $k \in \N$, it holds that $n + k \equiv n' + k \pmod{\ell}$ and $\alpha(n + k) = \alpha(n' + k)$ by (b).
\end{proof}

We now apply Theorem \ref{factorizationPA} to characterize length-based preassociative functions.

\begin{proposition}
Assume AC and let $F\colon X^* \to Y$ be a function. The following conditions are equivalent.
\begin{enumerate}
\item[(i)] F is preassociative and length-based.
\item[(ii)] There exist  functions $\mu\colon \N \to X^*$ and $f\colon X^* \to Y$ such that $F=f\circ\mu\circ \length{\cdot}$, where $\restr{f}{\ran(\mu\circ\length{\cdot})}$ is injective and the function $\alpha\colon\N\to \N$ defined by $\alpha(n)=\length{\mu(n)}$ satisfies condition (ii) of Proposition \ref{prop:g}.
\end{enumerate}
Moreover, the equivalence still holds if we add the condition that $F$ is standard in (i) and the condition $\mu(0)=\varepsilon$ in (ii).
\end{proposition}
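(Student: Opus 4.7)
The plan is to combine Theorem~\ref{factorizationPA}, which factors preassociative functions as $F=f\circ H$ with $H$ associative and $\restr{f}{\ran(H)}$ injective, with Proposition~\ref{prop:lengthassoc}(b), which characterizes length-based associative string functions in the form $\psi\circ\alpha\circ\length{\cdot}$ where $\length{\psi(n)}=n$ and $\alpha$ satisfies~\eqref{eq:g(n+k)}. Since \eqref{eq:g(n+k)} is equivalent to condition~(ii) of Proposition~\ref{prop:g}, these two pieces mesh exactly with the statement.

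For (i)$\Rightarrow$(ii), take $g\in Q(F)$ and let $H=g\circ F$, so that by Theorem~\ref{factorizationPA} and Lemma~\ref{lem:gfr} we have $F=f\circ H$ with $H$ associative and $f=\restr{F}{\ran(H)}$ injective. The key observation is that $H$ inherits length-basedness from $F$: if $\length{\bfx}=\length{\bfy}$ then $F(\bfx)=F(\bfy)$, whence $H(\bfx)=g(F(\bfx))=g(F(\bfy))=H(\bfy)$. Proposition~\ref{prop:lengthassoc}(b) then produces $\psi$ and $\alpha$ with $H=\psi\circ\alpha\circ\length{\cdot}$, and setting $\mu=\psi\circ\alpha$ yields $F=f\circ\mu\circ\length{\cdot}$ with $\length{\mu(n)}=\length{\psi(\alpha(n))}=\alpha(n)$; the required injectivity $\restr{f}{\ran(\mu\circ\length{\cdot})}=\restr{f}{\ran(H)}$ is inherited from Theorem~\ref{factorizationPA}, and $\alpha$ satisfies condition~(ii) of Proposition~\ref{prop:g}.

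For (ii)$\Rightarrow$(i), the aim is to realize $\mu\circ\length{\cdot}$ as an associative string function to which Theorem~\ref{factorizationPA} applies. Given $\mu$ and $f$ as prescribed, choose $\psi\colon\N\to X^*$ with $\length{\psi(n)}=n$ such that $\psi(\alpha(n))=\mu(n)$ on the image of $\alpha$ (with arbitrary extension elsewhere subject to the length condition). Then $H:=\psi\circ\alpha\circ\length{\cdot}$ coincides with $\mu\circ\length{\cdot}$ and is associative by Proposition~\ref{prop:lengthassoc}(b); since $\restr{f}{\ran(H)}$ is injective, Theorem~\ref{factorizationPA} delivers that $F=f\circ H$ is preassociative, and length-basedness is immediate. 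The standard refinement follows from Lemma~\ref{lem:gfr}(b): $F$ is standard iff $H$ is standard iff $H(\varepsilon)=\mu(0)=\varepsilon$. The main obstacle I expect is the well-definedness of $\psi$ in this converse direction, which requires that $\alpha(n)=\alpha(n')$ imply $\mu(n)=\mu(n')$; this compatibility is automatic in the forward direction (where $\mu=\psi\circ\alpha$ with $\psi$ forced to be injective by $\length{\psi(n)}=n$), but has to be extracted from the hypotheses of (ii) by combining the structural description of $\alpha$ in Proposition~\ref{prop:g}(ii) with the injectivity of $\restr{f}{\ran(\mu\circ\length{\cdot})}$.
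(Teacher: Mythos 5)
Your overall route is the one the paper takes: factor $F=f\circ H$ via Theorem~\ref{factorizationPA}, observe that $H=g\circ F$ inherits length-basedness from $F$, and then pass through Proposition~\ref{prop:lengthassoc}(b) and Proposition~\ref{prop:g}. The direction (i)~$\Rightarrow$~(ii), including the identification $\mu=\psi\circ\alpha$ and $\length{\mu(n)}=\alpha(n)$, is exactly the paper's argument and is fine.

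The problem is in (ii)~$\Rightarrow$~(i), at precisely the point you flag and then defer: the well-definedness of $\psi$, i.e., the implication $\alpha(n)=\alpha(n')\Rightarrow\mu(n)=\mu(n')$. You assert that this ``has to be extracted from the hypotheses of (ii)'', but it cannot be: neither the structure of $\alpha$ given by Proposition~\ref{prop:g}(ii) nor the injectivity of $\restr{f}{\ran(\mu\circ\length{\cdot})}$ says anything about how $\mu$ assigns distinct strings of equal length to distinct integers. Concretely, if $X$ contains two letters $a\neq b$, take $\mu(0)=\varepsilon$, $\mu(1)=\mu(2)=a$, $\mu(3)=b$, $\mu(n)=a$ for $n\geqslant 4$, and $f=\id$ (so $Y=X^*$). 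Then $\alpha(0)=0$ and $\alpha(n)=1$ for $n\geqslant 1$ satisfies condition (ii) of Proposition~\ref{prop:g} with $n_1=\ell=1$, and $f$ is injective on $\ran(\mu\circ\length{\cdot})=\{\varepsilon,a,b\}$; yet $F=\mu\circ\length{\cdot}$ has $F(a)=F(aa)=a$ while $F(ab)=a\neq b=F(aab)$, so $F$ is not preassociative. So the compatibility you postpone is not a technicality extractable from (ii) as written; it is exactly the content of associativity of $\mu\circ\length{\cdot}$ and must be imposed (equivalently: $\mu=\psi\circ\alpha$ for some length-preserving $\psi$, or $\mu\circ\alpha=\mu$). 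The paper's own proof of (ii)~$\Rightarrow$~(i) silently adopts this reading when it invokes Propositions~\ref{prop:lengthassoc} and \ref{prop:g} to conclude that $\mu\circ\length{\cdot}$ is associative; once the hypothesis is read that way, your construction of $\psi$ is well defined and the rest of your argument closes. A second, smaller slip: in the standard refinement, ``$H$ standard iff $H(\varepsilon)=\varepsilon$'' is not an equivalence for associative $H$ (cf.\ $F_a$ in Example~\ref{ex:LetterRemov56}); what is needed is that $\alpha(n)=0$ only for $n=0$, which is where the final clauses of Propositions~\ref{prop:lengthassoc}(a) and \ref{prop:g} enter.
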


\begin{proof}
(i) $\implies$ (ii). If $F$ is preassociative, then by Theorem \ref{factorizationPA} there is a associative function $H\colon X^* \to X^*$ and a map $f\colon X^*\to Y$ such that $F=f\circ H$ and $\restr{f}{\ran(H)}$ is injective. If $F$ is length-based, then so is $H$ and, by Propositions~\ref{prop:lengthassoc} and \ref{prop:g}, we have $H=\psi\circ \alpha \circ \length{\cdot}$ for some function $\alpha\colon \N \to \N$ satisfying condition (ii) of Proposition  \ref{prop:g}  and some function $\psi\colon \N \to X^*$ such that $\length{\psi(n)}=n$ for all $n\in \N$. It suffices to set $\mu=\psi\circ \alpha$ to obtain the desired result.

(ii) $\implies$ (i). The function $F=f\circ\mu\circ\length{\cdot}$ is clearly length-based. Moreover, according to Propositions~\ref{prop:lengthassoc} and \ref{prop:g}, the function $\mu\circ \length{\cdot}$ is associative. By Theorem \ref{factorizationPA}, the function $F$ is preassociative.

The last part of the statement is again a consequence of Propositions~\ref{prop:lengthassoc} and \ref{prop:g}.
\end{proof}

\section*{Acknowledgments}

This work was developed within the FCT Project PEst-OE/MAT/UI0143/2014 of CAUL, FCUL. It is also supported by the internal research project F1R-MTH-PUL-12RDO2 of the University of Luxembourg. The authors would like to thank the anonymous reviewer for timely and helpful suggestions.


\end{document}